\newtheorem{theorem}{Theorem}
\newtheorem{lemma}{Lemma}
\newtheorem{remark}{Remark}
\renewcommand{\div}{\mathop{\rm div}\nolimits}
\begin{document}
\title{Coupling of multiscale and multi-continuum approaches}

\author{
Eric T. Chung \thanks{Department of Mathematics,
The Chinese University of Hong Kong (CUHK), Hong Kong SAR. Email: {\tt tschung@math.cuhk.edu.hk}.
The research of Eric Chung is supported by Hong Kong RGC General Research Fund (Project 400411).}
\and
Yalchin Efendiev \thanks{Department of Mathematics \& Institute for Scientific Computation (ISC),
Texas A\&M University,
College Station, Texas, USA. Email: {\tt efendiev@math.tamu.edu}.}
\and
Tat Leung \thanks{Department of Mathematics, Texas A\&M University, College Station, TX 77843} \and
 Maria Vasilyeva\thanks{Department of Computational Technologies, Institute of Mathematics and Informatics, North-Eastern Federal University, Yakutsk, Republic of Sakha (Yakutia), Russia, 677980 \& Institute for Scientific Computation, Texas A\&M University, College Station, TX 77843-3368}
}

\newcommand{\hg}[1]{{\textcolor{green} {#1}}}

\newcommand{\hb}[1]{{\textcolor{blue} {#1}}}

\maketitle
\begin{abstract}
Simulating complex processes in fractured media requires some type of model reduction.
Well-known approaches include multi-continuum techniques, which have been commonly used
in approximating subgrid effects for flow and transport in fractured
media.
Our goal in this paper is to (1)  show
a relation between multi-continuum approaches and
Generalized Multiscale Finite Element Method (GMsFEM) and
(2) to discuss coupling these approaches for solving problems in complex
multiscale fractured media.
The GMsFEM, a systematic approach,
constructs multiscale basis functions via local spectral
decomposition in pre-computed snapshot spaces.
We show that
GMsFEM can automatically identify separate
fracture networks  via local spectral problems. We discuss
the relation between these basis functions and continuums in
multi-continuum methods. {The GMsFEM can}
automatically detect each {continuum} and represent the interaction
between the {continuum} and its surrounding (matrix).
For problems with simplified fracture networks, we propose
a simplified basis construction with the GMsFEM. This
simplified approach
is effective {when the fracture networks are known and have simplified
geometries}. We
show that {this approach} can achieve a similar result compared to the results
using the GMsFEM with spectral basis functions.
Further, we discuss {the} coupling between the GMsFEM and
multi-continuum approaches. In this case, many fractures are
resolved while for unresolved fractures, we use
a multi-continuum approach with local Representative Volume Element (RVE) information.
As a result, {the method} deals with a system of equations
on a coarse grid, where each equation represents one of the continua
on the fine grid. We present various basis construction mechanisms
and numerical
results. The GMsFEM framework,
in addition, can provide adaptive and online
basis functions
to improve the accuracy of coarse-grid simulations.
These are discussed in the paper. In addition, we present an example
of the application of our approach to shale gas transport in fractured media.

\end{abstract}

\section{Introduction}

{\bf Multiscale phenomena in fractured media.}
Subsurface formations with discrete fractures, faults, thin features
are common in many applications.
These include fractured subsurface formations,
fractured composite materials, and so on. A main challenge in
simulating complex processes is
due to multiple scale and high contrast.
The material properties within fractures
can be very different from the background properties. Due to complex
fracture configuration{s}, there are multiple scales and high contrast.

{\bf Fine-grid simulation for fractures.}
Constructing a fine-grid simulation model
 is typically done in several steps (we refer \cite{karimi2016general}
for the overview). As a first step,
an unstructured grid is used to describe the
fractures.
Then, the flow/transport equations are discretized on the unstructured
grid.
A variety of techniques have been applied for flow simulation
in porous media with discrete fractures using both finite-element
and finite-volume methods. Within the finite-element framework,
the standard Galerkin formulation (\cite{baca1984modelling, juanes2002general, karimi2003numerical, kim2000finite}),
the mixed finite-element method (\cite{erhel2009flow, hoteit2008efficient, ma2006mixed, martin2005modeling}){,} and the
discontinuous Galerkin method (\cite{eikemo2009discontinuous, hoteit2005multicomponent}) have been used to simulate single-phase and
multiphase flow in discrete fracture models. Within the finite-volume
framework, formulations have been presented by, e.g., \cite{bogdanov2003two, granet2001two, karimi2004efficient, monteagudo2004control, noetinger2015quasi, reichenberger2006mixed}. A hybrid approach combining the
finite-element method for the pressure equation and the finite volume
method for transport has also been investigated (\cite{geiger2009black, matthai2007finite, nick2011comparison}).

{\bf The need for model reduction.} Because of multiple scales
and high contrast,
some type of model reduction is needed for simulating physical
processes in fractured media. Typical approaches divide the domain
into coarse grids, where effective properties in each coarse-grid block
are computed \cite{dur91,weh02}.
The standard upscaling methods compute the effective properties
using the solution of local problems in each coarse block or
representative
volume. However, it is known that these approaches
are not sufficient as each coarse block contains multiple
important modes.
This led to multi-continuum approaches {\cite{arbogast1990derivation,barenblatt1960basic,kazemi1976numerical,pruess1982fluid,warren1963behavior,wu1988multiple}},
where several equations
are formulated for each coarse block. In particular, the flow equation
for  the background
(called matrix) and the fracture are written separately with
some interaction terms.
These approaches make several assumptions such as each continua
connected throughout the domain and the form of the coupling.
Our goal is to show a relation between these approaches and
some multiscale finite element methods and further discuss generalizations
based on these approaches.

{\bf Brief introduction to the GMsFEM.}
The GMsFEM {\cite{chung2016adaptive,chung2015generalizedwave,egh12}} follows the framework of the Multiscale Finite Element Method (MsFEM) and
is introduced to systematically add new degrees of freedom in each coarse
block. The new basis functions are computed by constructing the snapshots
and performing local spectral decomposition in the snapshot space.
It was shown that there is a spectral gap and the eigenvectors
corresponding to very small eigenvalues represent the connected
high-conductivity networks. These dominant eigenvectors
represent fracture networks and can be thought as reduced degrees of
freedom representing each continua as explained later.

{\bf This paper. }
In this paper, we discuss {\it a relation between the GMsFEM and the
multi-continuum approaches}.
As mentioned, the dominant eigenvectors
represent the connected fracture networks. For example,
if there are $n$ separate fracture networks within a coarse block, then
we will have $n$ very small eigenvalues and the corresponding
eigenvectors represent these fracture networks. We {give} a detailed
comparison between the multi-continua approaches and the GMsFEM in the paper.
We discuss the interaction between different
continuum media.
In multi-continuum approaches, this interaction is modeled
based on physical principles, while the GMsFEM approach
provides a rigorous coupling between multiple continua.
We note the GMsFEM automatically takes into account the interaction
of various continua between different coarse blocks, while
in multi-continuum approaches, this interaction is
stated apriori based on physical principles \cite{yan2016beyond}.
We also present simplified basis functions
that are related to fractures if the fracture networks are identified.


In the paper, we discuss {\it a coupled GMsFEM and
 the multi-continuum approaches} by considering fractures
over a very rich hierarchy of scales.
Our approach uses multi-continuum at the fine grid
and the GMsFEM for modeling the fractures that can be resolved
on the fine grid (see Figure \ref{schematic}).
In this case, {the method} deals with a system of equations coupled
with the fracture network. First, we discuss a GMsFEM for
this system. Secondly, we discuss approaches for computing
the parameters of the multi-continuum system based on local
Representative Volume Element (RVE) computations.
We discuss the setup of local RVE problems and
compute the parameters for the multi-continuum fine-grid
discretization. Since these parameters, in general, are heterogeneous,
{the use of coupled basis functions is crucial}, {and their constructions will be presented}.
{We will also} discuss a
relation between the GMsFEM and the Multiple
Interacting Continua (MINC)
\cite{pruess1982fluid}.

{On the other hand},
we {establish} {\it a relation between offline GMsFEM and the
multi-continua approaches.} The GMsFEM has several important
fundamental ingredients that can further be used to achieve higher accuracy
and more efficiency.
The first ingredient includes the adaptivity. The GMsFEM{’s}
adaptivity can be used to add multiscale basis functions in selected regions.
This concept can be effectively used to add new multiscale
basis functions in selected regions.
The second ingredient of the GMsFEM
 is online basis functions. These basis functions
are constructed using the residual information (adaptively in space and time)
 to speed-up the simulations. This can be used
to speed-up the convergence of the proposed method.

We {will show} numerical results.
First, we discuss the GMsFEM{’s} basis construction
and numerically show how to identify the number of continua
based on local spectral decomposition and the spectrum.
Then, we present
a simplified basis construction and  numerical results
for the GMsFEM using both simplified basis construction and
a general approach.
In the second part, we {demonstrate} numerical results
when the GMsFEM and the multi-continuum approaches are coupled.
In this case, the multi-continuum approach is used on the fine grid.
Our numerical results use both coupled and un-coupled basis functions
and show that the GMsFEM {is able to couple with the multi-continuum appraoch and gives accurate solution using few basis functions}. The GMsFEM can be used
for heterogeneously varying multi-continuum problems.
In \cite{aevw16}, we have applied the GMsFEM to shale gas
flows in multi-continuum
media. In this paper, we also present an example of the application
of our proposed approach to shale gas transport.

{Furthermore}, we will present an analysis for the GMsFEM when the fine-scale
problem is described by a multi-continuum approach. In this case, {the method gives}
a system of coupled equations. We study the convergence of
the GMsFEM for cases when basis functions are  independently constructed
and in a coupled fashion. In both cases, convergence results are obtained.

The paper is organized as follows. In Section \ref{sec:sec1},
we discuss the relation between the GMsFEM and multi-continuum
approaches. We {develop} simplified basis functions and
show numerical results. In Section \ref{sec:sec2},
we discuss the coupled GMsFEM and the multi-continuum
approach. We {also show} numerical results in this section.
The analysis is {given} in the Appendix \ref{appendix}.

\section{Relation between the GMsFEM and multi-continuum}
\label{sec:sec1}

The goal of this section is to highlight the similarities
between the multi-continuum approaches and the GMsFEM.
We assume that the fractures are resolved
on a fine grid.
We show that (1) the GMsFEM can identify fracture
networks and result {in} a similar system as a multi-continuum
approach{,} (2) the GMsFEM can resolve the detailed fracture
and matrix interaction{, and} (3) the GMsFEM basis functions
can be computed in a simplified way.

\subsection{Fine-grid equations in fractured media}
We consider a detailed fine-grid discretization of the flow equation
in the fractured media
\begin{equation}
  \label{eq:main1}
  c {\partial u\over\partial t} = {\text{div}}(\kappa \nabla u) +q,
  \end{equation}
where $u$ is the solution, $q$ is the source term, $\kappa$ is permeability and $c$ is porosity.
The permeability is large within fractures and the porosity
has a smaller value in the fractures.
{Fractures} are modeled
as one dimensional objects.

The domain $D$ is divided into the fracture and the matrix region
\begin{equation}
D = D_{m} \oplus_i \, d_i D_{f,i},
\end{equation}
where $m$ and $f$ represent the matrix and the fracture
regions.
$d_i$ denotes the aperture of the $i$-th fracture and
$i$ is the index of the fractures.
We denote by $\kappa_i$ the permeability of the $i$-th fracture.
$D_m$ is a two-dimensional domain and $D_{f,i}$ is a
one-dimensional domain.
The system is written in a finite{-}element discretization.
We introduce the concepts of fine and coarse grids.
Let $\mathcal{T}^H$ be a coarse-grid partition (computational grid)
of the computational domain
$D$ into finite elements (triangles, quadrilaterals, tetrahedra, etc.).
We assume that each coarse element is partitioned into
a connected union of fine grid blocks.
The fine-grid partition will be denoted by
$\mathcal{T}^h$, and is by definition a refinement of the coarse grid
$\mathcal{T}^H$.
We use $\{x_i\}_{i=1}^{N}$ (where $N$ denotes the number of coarse nodes) to denote the vertices of
the coarse mesh $\mathcal{T}^H$ and define the neighborhood of the node $x_i$ by
\begin{equation} \label{neighborhood}
\omega_i=\bigcup\{ K_j\in\mathcal{T}^H; ~~~ x_i\in \overline{K}_j\}.
\end{equation}
See Figure \ref{schematic} for illustration.

The bilinear form for the resulting system is
\begin{equation}
\label{eq:main_eq1}
\begin{split}
\int_{D_m}  c_m {\partial u_{h} \over \partial t} v_h \, {dx}  &+
 \sum\limits_i
\int_{D_{f,i}} c_{f,i}  {\partial u_{h} \over \partial t} v_h \, {dx} \\
&+\int_{D_m}   \kappa_m \nabla u_{h} \cdot \nabla v_h \, {dx} +
 \sum\limits_i
\int_{D_{ f,i}} \kappa_{f,i}\nabla_f u_{h}\cdot \nabla_f v_h \, {dx}  = \int_D q v_h \, {dx},
\end{split}
\end{equation}
where $v_h$ is the fine-grid finite element function, $\nabla_f$ is
the derivative along the fracture lines,
$c_m$ and $\kappa_m$ porosity and permeability in the matrix,
$c_{f,i}$ and $\kappa_{f,i}$ porosity and permeability in the fractures,
and
$i=1,...,N$.
The fracture permeability
and porosity include the aperature information $d_i$.
We remind that
in our setup, we assume that a fine grid resolves some set of fractures (very detailed), while each fine grid can contain many small fractures (see Figure \ref{schematic}),
i.e., multi-continua.

\begin{figure}[!ht]
  \centering
  \includegraphics[width=0.6 \textwidth]{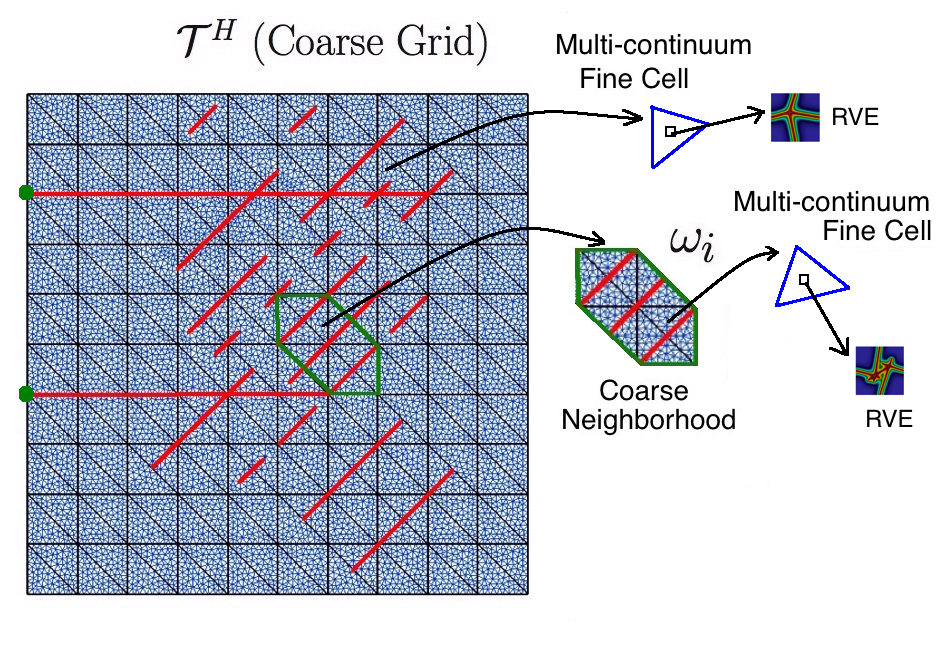}
  \caption{Illustration of a coarse neighborhood and coarse element.}
  \label{schematic}
\end{figure}

\subsection{Multi-continuum approach. A brief summary.}

The multi-continuum approach is an average model,
which is solved on a coarse grid.
We denote the solution for $i${-}th continuum by $u_i$ and
assume that each continuum
interacts with every other, for {the} sake of generality.
Then, we can write the resulting system as
\begin{equation}
\label{eq:multi_cont}
{\mathcal C}_{ii} {\partial u_i \over \partial t}=\text{div}(\kappa_i \nabla u_i) + Q_i(u_1,...,u_N),
\end{equation}
where $Q_i(u_1,...,u_N)$ is an exchange term, which can contain both space
and time derivates of $u_i$'s {\cite{arbogast1990derivation,barenblatt1960basic,kazemi1976numerical,pruess1982fluid,warren1963behavior,wu1988multiple}}.
 In a special case when each continuum only interacts with the
background, if the background is $u_1$, then
\[
{\mathcal C}_{ii}{\partial u_i \over \partial t}=\text{div}(\kappa_i \nabla u_i) + Q_i(u_1,u_i) + q,
\]
where $q$ is the source term.
We write this equation as
\begin{equation}
\label{eq:main11}
\begin{split}
\int_{D} {\mathcal C}_{ii}{\partial u_i \over \partial t} v \, {dx}
+\int_{D}   \kappa_i \nabla u_i \cdot \nabla v \, {dx}  {-}
\int_{D}   Q_i  v \, {dx}
= \int_D q v \, {dx},
\end{split}
\end{equation}
where $u_i$ is solved on a coarse grid using standard basis functions
and $v$ is a standard basis function.

\subsection{A brief overview of the GMsFEM.}
\label{sec:overview}

We discuss the use of the GMsFEM on a
coarse grid.
The GMsFEM uses the coarse grid $\mathcal{T}^H$ and  constructs
a local reduced-order model for each coarse block, by constructing snapshot
solutions and extracting basis functions. Snapshots are constructed by
solving local problems subject to some boundary conditions.
Below, we briefly discuss the snapshot calculations and
the basis computations.

\subsubsection{Snapshots and multiscale basis.}

We briefly describe the construction of the snapshot space
$V_{\text{snap}}^{\omega_i}$. We refer to \cite{chung2016adaptive, egh12}
for further discussions. The snapshot space consists
of local solutions.
Harmonic functions can be used  to construct a snapshot space. We define
$\delta_l^h(x)$, where
$\delta_l^h(x)=\delta_{l,j},\,\forall l,j\in \textsl{J}_{h}(\omega_i)$,
where $\textsl{J}_{h}(\omega_i)$ denotes the fine-grid boundary node on $\partial\omega_i$ and solve the local problems with $\delta_l^h(x)$
as boundary conditions.
More precisely,
given a fine-scale piecewise linear function defined on
$\partial\omega$ ($\omega$ is a coarse block and we omit the index $i$),
we define $\psi_{l}^{\omega, \text{snap}}$ by {the} following variational problem
\begin{equation}
\label{harmonic_ex}
a(\psi_{l}^{\omega, \text{snap}} , v) =
\int_{\omega}   \kappa_m \nabla \psi_{l}^{\omega, \text{snap}} \cdot \nabla v_h \, dx +  \sum\limits_j
\int_{D_{f,j}\cap \omega} \kappa_{f,j}\nabla_f \psi_{l}^{\omega, \text{snap}} \cdot \nabla_f v_h \, dx
=0 \quad \text{in } \, \omega
\end{equation}
and $\psi_{l}^{\omega, \text{snap}}=\delta_l^h(x)$ on $\partial\omega$.
Note that the source
is also placed on fracture boundaries.
The snapshot space is defined as
$$
V_{\text{snap}} = \text{span}\{ \psi_{l}^{ \text{snap}}:~~~ 1\leq l \leq L_i \},
$$
where $L_i$ is the number of functions in the snapshot space in  $\omega$
(a generic coarse block). We also denote
$$
R_{\text{snap}} = \left[ \psi_{1}^{\text{snap}}, \ldots, \psi_{L_i}^{\text{snap}} \right].
$$
We note that {the randomized boundary conditions \cite{randomized2014}
can be used} to reduce the computational cost. In particular, we solve local problem{s}
subject to the boundary condition
\[
\psi_{l}^{ \text{snap}}=r_l,
\]
where $r_l$ takes independent random values at every grid block
in an oversampled region $\omega^+$, $\omega\subset \omega^+$ (see \cite{randomized2014} for details). In this way, we can compute
only $n+4$
snapshots for $n$ offline basis vectors.

To construct the offline space, $V_{\text{off}}^\omega$, {the} local spectral problem is solved in the snapshot space \cite{egw10}.
More precisely,
\begin{equation}
\label{offeig}
A^{\text{off}} \Psi_l^{\text{off}} = \lambda_l^{\text{off}} S^{\text{off}} \Psi_l^{\text{off}},
\end{equation}
where
\[
A^{\text{off}} = [a_{mn}^{\text{off}}] =
\int_{\omega}   \kappa_m \,  \nabla \psi_m^{\text{snap}} \cdot \nabla \psi_n^{\text{snap}} \, dx +
 \sum\limits_j
\int_{D_{ f,j}\cap \omega} \kappa_{f,j} \,  \nabla_f \psi_m^{\text{snap}} \cdot \nabla_f \psi_n^{\text{snap}} \, dx
\] \[
S^{\text{off}} = [s_{mn}^{\text{off}}] =
\int_{\omega}   \kappa_m\,  \psi_m^{\text{snap}} \psi_n^{\text{snap}} \, dx +
 \sum\limits_j
\int_{D_{ f,j}\cap \omega} \kappa_{f,j} \,  \psi_m^{\text{snap}} \psi_n^{\text{snap}} \, dx.
\]
To compute the offline space, we choose  $M^{\omega}_{\text{off}}$  smallest eigenvalues  and form
$\psi_m^{\text{off}} = \sum_{l=1}^{L_i} \Psi_{ml}^{\text{off}} \psi_l^{\text{snap}}$ for $m=1,\ldots, M^{\omega}_{\text{off}}$.
Furthermore,
the partition of unity functions $\chi_i$ (taken to be
linear basis functions supported in $\omega_i$) is multiplied
by the eigenfunctions in the offline space $V_{\text{off}}^{\omega_i}$
to construct the resulting basis functions
\begin{equation} \label{cgbasis}
\psi_{i,j} = \chi_i \psi_j^{\omega_i, \text{off}} \quad \text{for} \, \, \,
1 \leq i \leq N \, \, \,  \text{and} \, \, \, 1 \leq j \leq M_{\text{off}}^{\omega_i}.
\end{equation}
Here $M_{\text{off}}^{\omega_i}$ denotes the number of offline eigenvectors that are selected for each coarse node $i$. With the partition of unity functions,
we obtain conforming basis functions in the space
\begin{equation} \label{cgspace}
V_{\text{off}}  = \text{span} \{ \psi_{i,j} : \,  \, 1 \leq i \leq N \, \, \,  \text{and} \, \, \, 1 \leq j \leq M_{\text{off}}^{\omega_i}  \}.
\end{equation}
We can write $V_{\text{off}} = \text{span} \{ \psi_{i} \}_{i=1}^{N_c}$, where $N_c =\sum_{i=1}^{N}M_{\text{off}}^{\omega_{i}}$ (here, we use a single index)
and define
\[
R^T = \left[ \psi_1 , \ldots, \psi_{N_c} \right],
\]
where $\psi_i$ are nodal values of each basis function defined on the fine grid.

{We remark that there are other} discretizations, such as discontinuous Galerkin methods,
hybridized Galerkin methods, or other methods. Multiscale basis
functions can be constructed following a general framework
\cite{chung2016adaptive}. The use of discontinuous basis functions
coupled within DG can be an attractive approach for these applications
and we will study it in {the} future.

\subsection{A numerical example demonstrating fracture networks and associated eigenvalues}

Next, we discuss some properties of multiscale basis functions, which
show that the GMsFEM basis functions can identify the fracture
networks in a general case. Further, we present some simplified
basis computations, when the fracture networks have simplistic geometries.
We note that each basis function
represents a connected fracture network. To show this,
we depict an example in Figure \ref{sol-bst}
with several fractures. In the figure,
we also show the eigenvalues. {It can be observed} that
there are three very small eigenvalues and the fourth
one is large. The eigenvalue distribution shows
that there are three fracture networks. Our construction
can detect the fracture networks when fractures have a complex spatial
distribution.
Moreover, multiscale basis functions can capture the interaction
between the fracture and the background media.

\begin{figure}[!ht]
  \centering
    \includegraphics[width=0.2 \textwidth]{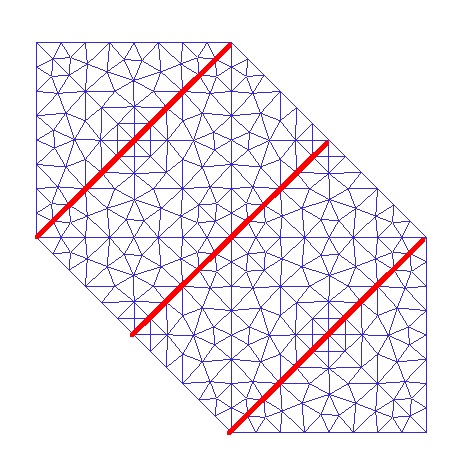}
    \includegraphics[width=0.75 \textwidth]{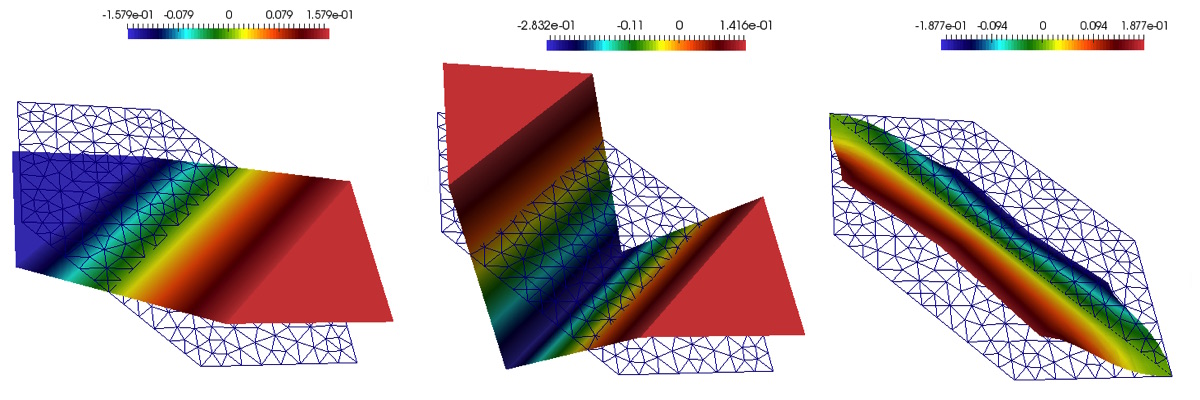}
    \caption{GMsFEM basis functions in a domain $\omega$.
      The eigenvalues are
    $\lambda_1 = 1.26 \cdot 10^{-14}$,
    $\lambda_2 = 2.3 \cdot 10^{-7}$,
    $\lambda_3 = 7.0 \cdot 10^{-7}$,
    $\lambda_4 = 0.16$. }
  \label{sol-bst}
\end{figure}

\subsection{Simplified basis functions.}

For simple cases, {simplified basis functions can be constructed}.
For these basis functions, we can choose constants
within fracture networks and solve  local problems.
In this way, we can avoid a general procedure.
Our main approach, which we will test, is the following.
For each $\omega$, we define the fracture networks
$\Gamma_1^\omega,...,\Gamma_M^\omega$ (see Figure \ref{schematic_simplified}).
Each fracture network
intersect{s with} the boundary of $\omega$ at the points $B_{i}^{\Gamma_j^{\omega}}$.
Then, the multiscale basis functions are defined as
\[
L(\phi_m)=0,
\]
\[
\phi_m(B_{i}^{\Gamma_j^{\omega}})=\delta_{mj}.
 \]
Here, $L$ corresponds to the local solution operator (\ref{harmonic_ex}).
These basis functions are multipled by the partition of unity functions.
The basis functions are plotted in Figure \ref{sol-bsm}.

\begin{figure}[!ht]
  \centering
    \includegraphics[width=0.2 \textwidth]{om81}
    \includegraphics[width=0.75 \textwidth]{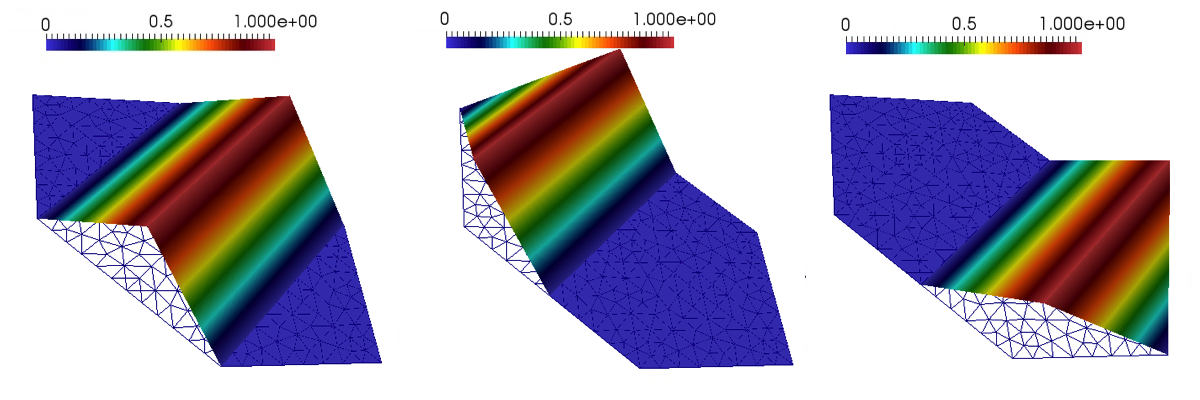}
    \caption{Simplified basis functions in a domain $\omega$. }
  \label{sol-bsm}
\end{figure}

\begin{figure}[!ht]
  \centering
  \includegraphics[width=0.6 \textwidth]{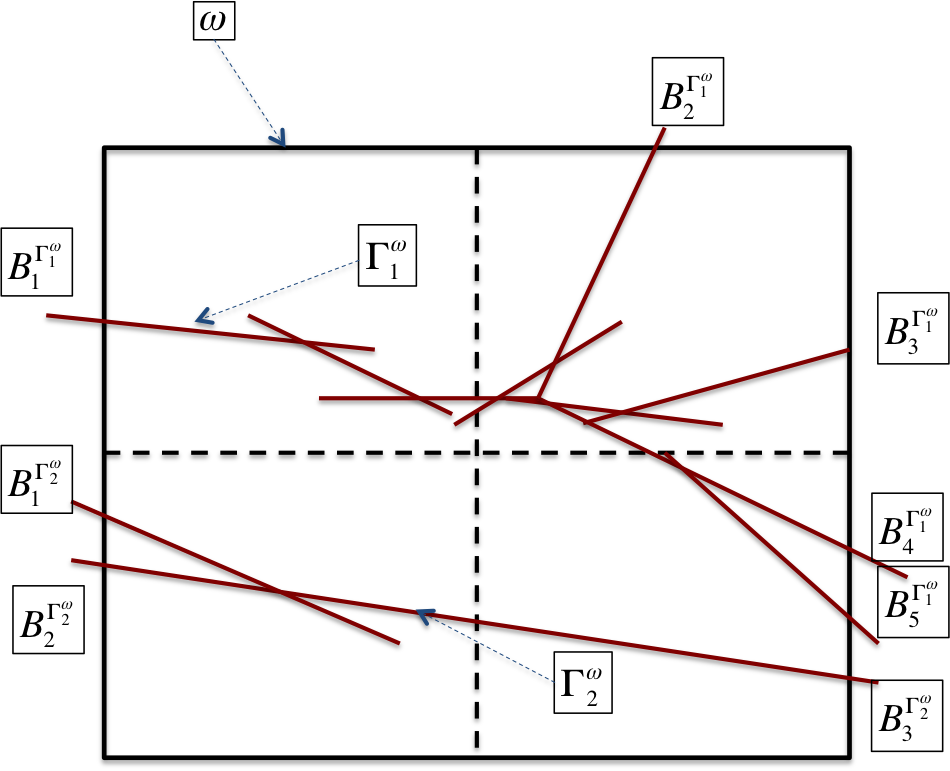}
  \caption{Illustration of coarse neighborhood and simplified basis functions{.} }
  \label{schematic_simplified}
\end{figure}

\begin{remark}
  {There are other possible approaches that can be considered}. For example,
  the following approach can be an alternative.
    We denote each rectangle $K\subset \omega$
 and denote internal edges by $\partial \omega^I$.
 We denote the fractures by $\Gamma_1^K,...,\Gamma_M^K$ in $K$
 and the boundary nodes
 \[
B_i=B_{i}^{\Gamma_j^K}.
\]
Then, the multiscale basis functions are defined as
\[
L(\phi_m)=0,
\]\[
\phi_m(B_{i})=\delta_{mi}.
 \]
\end{remark}

 \subsection{Relating basis to fractures}


If we denote the basis function for {the} $i$-th
network by $\phi_i^{\omega_j}$, as described above, then
\[
u_i = \sum_j c_i^{\omega_j}\phi_i^{\omega_j}.
\]
Note that
\[
u = \sum_i u_i.
\]
In this case, the coarse-grid equation
obtained by the GMsFEM for the basis representing {the}
$i$-th fracture network can be written
as
\[
\int_D c {\partial u_i \over \partial t}\phi_i^{\omega_l} \, {dx} +
\int_D \kappa \nabla u_i \cdot \nabla \phi_i^{\omega_l} \, {dx} =
\int_D q \phi_i^{\omega_l}  \, {dx} -
\sum_{s, s\not = i}
\int_D c {\partial u_s \over \partial t}\phi_i^{\omega_l} \, {dx}
-
\sum_{s, s\not = i}  \int_D \kappa \nabla u_s \cdot \nabla \phi_i^{\omega_l} \, {dx},
\]
$l=1,...$
The last two terms represent the interaction of {the} $i$-th continu{um} with
the other continua. In a special case when
 the interaction is only with the background,
this implies that the support of $\phi_i^{\omega_l}$ and
$\phi_j^{\omega_m}$ is empty unless $j=1$ or $j=i$. In this case, the
equation reduces to
\[
\int_D c {\partial u_i \over \partial t}\phi_i^{\omega_l} \, {dx}+
\int_D \kappa \nabla u_i \cdot \nabla \phi_i^{\omega_l}  \, {dx}=
\int_D q \phi_i^{\omega_l} \, {dx} -
\int_D c {\partial u_1 \over \partial t}\phi_i^{\omega_l} \, {dx}
-
\int_D \kappa \nabla u_1 \cdot \nabla \phi_i^{\omega_l} \, {dx}.
\]

\subsection{A numerical example}

In this example,
we take $D = [0, 60]^2$ and solve
\[
  c {\partial u\over\partial t} - {\text{div}}(\kappa \nabla u) = 0, \quad x \in D
\]
by resolving the fractures with {an} embedded fracture model
on the fine grid (see e.g., \cite{akkutlu2015multiscale}).
The model for $\kappa$ is shown in Figure \ref{schematic}.
We choose  initial conditions
$u =u_f = 1$ and,
as the boundary conditions, we set $u = 0$ {at the} two points $(0, 24)$ and $(0, 48)$ and on other boundaries we use zero Neuman boundary conditions. Here, $T_{max} = 300$ is the final time. We set $c_m = 0.1$, $\kappa_m = 10^{-2}$ for {the} matrix coefficients and $c_f = 0.01$, $\kappa_f = 10^{4}$ for {the} fracture.

{
We will compare the results in the weighted $L^a_2(u)$ norm and weighted $H^a_1(u)$ semi-norm computed as
\[
||e_u||_{L_2} = ||u - u_h||_{L_2} / || u_h ||_{L_2},
\quad
|e_u|_{H_1} = |u - u_h|_{H_1} / | u_h |_{H_1},
\]
where $||u||^2_{L_2} = \int_{\Omega} k \, u^2 \, dx$, $|u|^2_{H_1} = \int_{\Omega} (k \, \nabla u, \nabla u) \, dx$, $u_h$ and $u$ are the fine-scale and coarse-scale (multiscale) solutions.
In the simulation results, we use $\mathcal{M}$ to denote the number of basis functions per coarse element for $u$ and $DOF$ is the number of degrees of freedom.
}

In Figure \ref{sol-s}, we show solutions at the final time $T_{max}$.
In Table \ref{tab:s}, we present relative errors for GMsFEM and simplified basis functions.
The top portion of the table (``Standard GMsFEM'') uses multiscale
basis functions constructed from the spectral problems and take {the}
equal amount of basis functions in each coarse region. Here,
$\mathcal{M}$ refers to the number of basis functions per node.
As we observe that if we take $4$ basis functions per node, the error
is below 5\%. In the second portion of the table, we
show the results if basis functions are
selected based on small eigenvalues.
$M_{\lambda}$ refers to the case when we take only very small eigenvalues
that represent the fracture networks. In this case, the error is small.
$M_{\lambda}-1$ and $M_{\lambda}+1$ refer to the cases when we take one less or
one more basis functions in each node. In the bottom portion, we use simplified
basis functions. As we observe that the simplified basis captures the networks and provide a small error.


\begin{figure}[!ht]
  \centering
  \includegraphics[width=0.32 \textwidth]{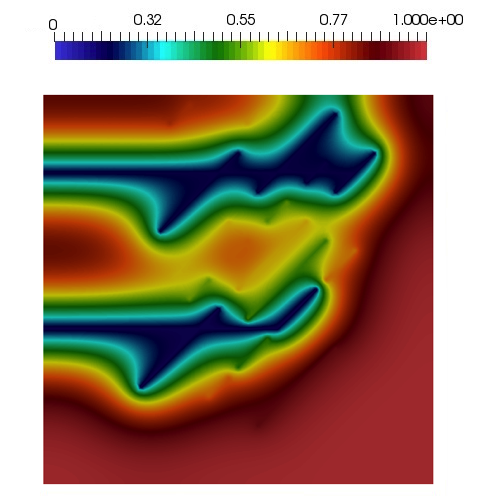}
  \includegraphics[width=0.32 \textwidth]{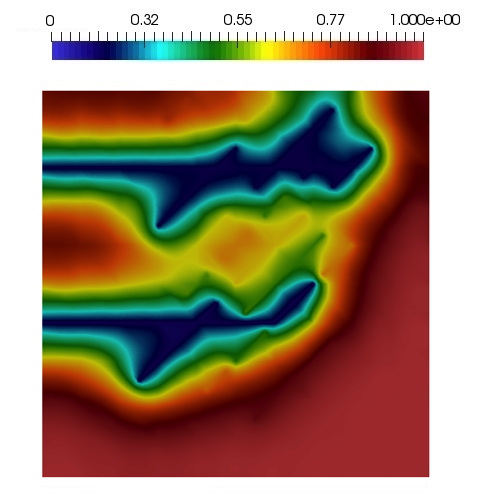}
  \includegraphics[width=0.32 \textwidth]{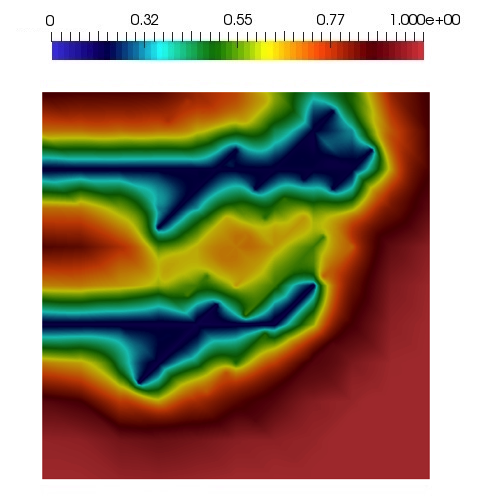}
  \caption{Single-continuum background results.  Left: Fine-scale solution $DOF = 8917$. Middle: Coarse-scale solution $DOF = 396$ using GMsFEM. Right: Coarse-scale solution $DOF = 268$ using simplified basis functions.}
  \label{sol-s}
\end{figure}

\begin{table}[!ht]
\begin{center}
\begin{tabular}{|c|c|c|c|}
\hline
$\mathcal{M}$  & dim($V_{\text{off}}$) & $L^a_2(u)$ & $H^a_1(u)$ \\
\hline
\multicolumn{4}{|c|}{Standard GMsFEM}  \\ \hline
1		&	 121		& 49.067	& 79.885 \\
2		& 	 242		& 9.116	& 36.499  \\
3		&	 363		& 2.325	& 8.099 \\
4		& 	 484		& 1.413	& 3.919\\
5		&	 605		& 0.883	& 2.658 \\
6		&	 726		& 0.708	& 1.795 \\
8		&	 968		& 0.253	& 0.348 \\
16    &  1936	& 0.095	& 0.089 \\
\hline
\multicolumn{4}{|c|}{GMsFEM by $\lambda$}  \\ \hline
$M_{\lambda}-1$    &  184		&	 13.554	& 45.342 \\
$M_{\lambda}$    &  275		&	 2.651	& 11.377 \\
$M_{\lambda}+1$     &  396		&	 1.582	& 4.414 \\
\hline
\multicolumn{4}{|c|}{Simplified basis functions}  \\ \hline
all    			&  268		&	1.850	& 3.799 \\
\hline
\end{tabular}
\end{center}
\caption{Single-continuum background. Numerical results of relative errors (\%) at the final simulation time. $DOF_f = 8917$.}
\label{tab:s}
\end{table}

\section{The coupled GMsFEM and multi-continuum}
\label{sec:sec2}

In this section, we discuss a combined GMsFEM and multi-continuum method.
We assume that some fractures are resolved on the fine grid, while
other fractures are represented using {a} multi-continuum approach
at the fine-grid
level. As a result, we deal with a system of equations with reaction
tensors.
We note that each continuum interacts with the resolved fractures and,
also, they interact among themselves.
We discuss coupled and un-coupled basis constructions.
The couple{d} basis functions are important for some
flow scenarios as we discuss. The analysis of the method
is given in Appendix \ref{appendix}.

We will consider two cases. In the first case, we simply
use some values for transfer coefficients $Q$ and
in the second case, we compute these transfer coefficients
from RVE simulations. In both cases, we assume that each
continuum is connected to the fracture.
Thus, the fractures are added to each continuum equation with an appropriate  weight
$\gamma_i$, which represents the amount of the
fluid passed to the fracture
network from the $i$-th continu{um}. We can assume $\sum_i \gamma_i=1$.
The resulting equations have the following
variational form
\begin{equation}
\label{eq:discr1}
\begin{split}
\int_{D_m} &c_{m,s}   {\partial u_s \over \partial t} v \, {dx}  +
 \sum\limits_i
\int_{D_{f,i}} c_{i,s} {\partial u_s \over \partial t} v \, {dx} \\
&+\int_{D_m}   \kappa_s \nabla u_s \cdot \nabla v \, {dx} +
 \sum\limits_i
\int_{D_{ f,i}} \kappa_{i,s}\nabla u_s\cdot \nabla v \, {dx}  = \int_D Q_s v \, {dx},
\end{split}
\end{equation}
$s=1,...,N$.
Here, $\kappa_{i,s}$ is the fracture permeability that takes into account
the interaction of {the} $s$-th continuum with the resolved fracture network
and {$c_{i,s}$} is the mass exchange term that take into account
the interaction between the fracture and the $s$-th continuum.
Note that $c_{i,s}$,  $\kappa_{i,s}$, and $Q_s$ depend on $\gamma_s$.
This is a coupled system of differential equations with multiscale
high-contrast coefficients. The coupling is done via
the right hand side and, thus,
multiscale basis functions can be constructed
for separately for each equation using the high-contrast permeabilities
or jointly.

In our numerical simulations, we will consider two approaches for
constructing multiscale spaces as described in Appendix \ref{appendix}.
In the first approach (called un-coupled), multiscale basis
functions will be constructed for each continuum separately by
considering only the permeability $\kappa_i$ and ignoring the
transfer functions. This is the same as using single-phase
flow basis functions for each continuum and
follows the GMsFEM approach discussed
above.
In the second approach, the multiscale basis functions will
be constructed by solving {a} coupled problem for snapshot spaces
and performing a spectral decomposition as discussed in Appendix \ref{appendix}.
{The resulting GMsFEM procedure is the same as the one presented in Section \ref{sec:overview},
except that the construction of the snapshot functions is replaced by the coupled approach discussed above.}
Note that a different spectral problem is used for a coupled
basis construction.


We present numerical results.
We consider the model shown in Figure \ref{schematic}.
We consider {a} dual porosity system for
un-resolved fractures ($u_f$) and matrix flow ($u_m$).
For {the}  un-resolved fractures parameters, we set $\kappa_f = 10^{-3}$ and
$c_f = 0.1$. For {the} matrix parameters, we use $\kappa_m = 10^{-7}$ and
$c_m = 0.01$. Using DFN {(discrete fracture network)}, we implement {a} resolved fracture network
($u_F$), which interacts with both {the} un-resolved fracture system
($80\%$) and {the} matrix system ($20\%$). That is both matrix and un-resolved
fracture system communicate with the resolved fractures.
We set $\kappa_F = 10^{3}$, $c_F = 0.1$. Here, we use
the transfer parameter $Q = 250 \cdot \kappa_m$ and $T_{max} = 5000$.
In Figure \ref{sol-dp}, we show solutions at the final time.
In Table \ref{tab:dp-cp},
we present relative errors for GMsFEM and simplified basis functions.
In this table, we present the results when {the} basis {is} computed in a coupled way and
separately for each continuum using {the} flow equation (without transfer functions).
From this table, we observe that the GMsFEM using coupled basis
functions provides better accuracy compared to that computed with
un-coupled basis functions. Moreover, we observe
that when choosing $6$ basis functions per coarse node, we can obtain an excellent
result using the GMsFEM.
{We have also tested the
GMsFEM with simplified basis functions. The results are similar to those obtained from above.
In particular, we observe} a similar accuracy if we choose only basis functions
corresponding to very small eigenvalues. We observe large errors if
we do not choose eigenvectors corresponding to very small
eigenvalues.

\begin{figure}[!ht]
  \centering
  \includegraphics[width=0.49 \textwidth]{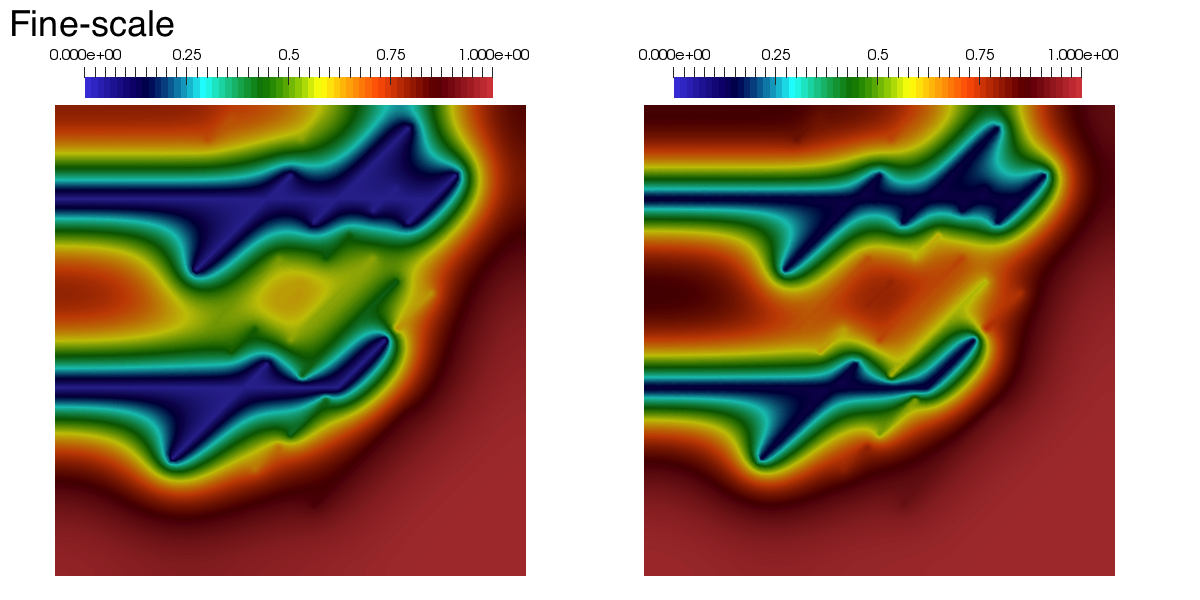}
  \includegraphics[width=0.49 \textwidth]{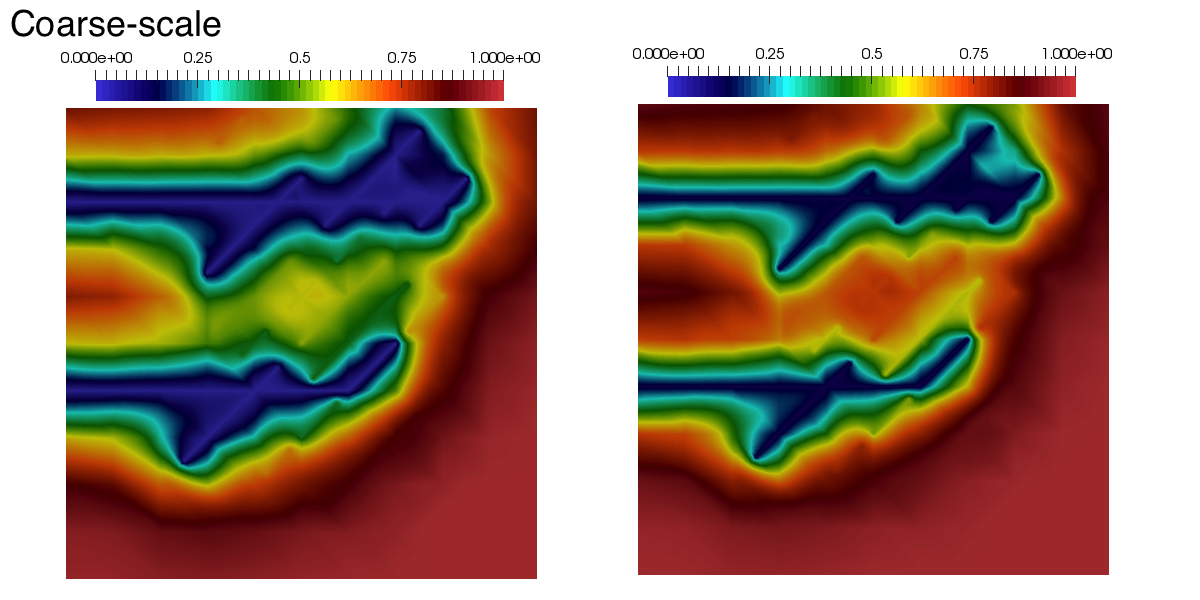}
  \caption{Dual-continuum background.
  Left: Fine-scale solution $DOF = 17834$.
  Right: Coarse-scale solution $DOF = 536$ using simplified basis functions.}
  \label{sol-dp}
\end{figure}

\begin{table}[!ht]
\begin{center}
\begin{tabular}{|c|c|c|c|c|c|c|}
\hline
$\mathcal{M}$  & dim($V_{\text{off}}$)
& $L^a_2(c_1)$ & $H^a_1(c_1)$
& $L^a_2(c_2)$ & $H^a_1(c_2)$ & $H^Q(c_1, c_2)$ \\
\hline
\multicolumn{7}{|c|}{Standard GMsFEM (un-coupled)}  \\ \hline
6		&	 1452	& 0.846	& 1.966	&  	0.846	& 15.938	& 4.519 \\
8		&	 1936	& 0.297	& 0.379	&  	0.297	& 15.835	& 4.093 \\
12	&	 2904	& 0.148	& 0.188	&  0.148		& 14.213	& 3.670 \\
16    &  3872	& 0.106	& 0.087	& 	0.106		& 11.769	& 3.041 \\
\hline
\multicolumn{7}{|c|}{Simplified basis functions (un-coupled)}  \\ \hline
all    	&  536		&	2.386	& 3.831	& 2.386	& 21.765	& 6.713  \\
\hline
\multicolumn{7}{|c|}{Standard GMsFEM (coupled)}  \\ \hline
6		&	 1452	& 1.919	& 2.767	&  	1.919	& 9.919	& 3.588 \\
8		&	 1936	& 1.052	& 1.154	&  	1.052	& 2.840	& 1.335 \\
12	&	 2904	& 0.354	& 0.550	& 	 	0.354	& 1.325	& 0.628 \\
16    &  3872	&0.124		& 0.102	&  	0.124	& 0.643	& 0.194 \\
\hline
\multicolumn{7}{|c|}{Simplified basis functions  (coupled)}  \\ \hline
all    	&  830		&	2.056	& 3.439	&  	2.056	& 6.234	& 3.690 \\
\hline
\end{tabular}
\end{center}
\caption{Dual-continuum background. Numerical results of relative errors (\%) at the final simulation time. $DOF_f = 17834$. $Q = 250 \cdot \kappa_m$.}
\label{tab:dp-cp}
\end{table}

\subsection{RVE-based multi-continuum computations}

{A r}epresentative volume element can be used
to compute the parameters in multi-continuum equations.
We follow a known procedure (see e.g., \cite{karimi2016general}).
 In this approach, we compute the transfer parameters based on RVE
 simulations.
 We consider a case of two-continua at the microscale and compute
the transfer function based on RVE simulations.
For this reason, we solve the local problem with DFN
(corresponding to (\ref{harmonic_ex}))
\[
c  {\partial \xi \over\partial t} - {\text{div}} (\kappa \nabla \xi)=0\ \text{{in the} RVE}
\]
and impose $\xi=1$ at the fracture nodes. One can also use a source term
in the fracture or the local eigenvalue problems see \cite{karimi2016general}.
It is assumed that zero Neumann boundary conditions are imposed on the rest of the boundaries.
Then, the transfer coefficient is defined as
\[
  Q(t)=F_{\text{frac}}(t)/(1-\langle \xi\rangle_{\text{matrix}}(t)).
\]
$Q(t)$ will quickly reach an asymptote, which is used as a transfer coefficient.
Here $\langle \xi\rangle_{\text{matrix}}$ is the volume average over the matrix region.


We present results for {a} dual-continuum coupled with the GMsFEM.
We set $c_f = c_m = 0.1$, $\kappa_f = 10^{-3}$, $\kappa_m = 10^{-7}$,
 and $c_F = 0.01$, $\kappa_F = 10^{3}$ for {the} fracture. Using DFN,
we implement {the} resolved fracture network with $80\%$ in
$c_f$ and $20\%$ to $c_m$ {as well as} $T_{max} = 5000$.
Here, we use {the} transfer function{s} $Q_1 = 500 \cdot \kappa_m$ for
$y < 5L_y/10 $, $Q_2 = 920 \cdot \kappa_m$ for $y >  7L_y/10$,
  and linear{ize} $Q$ in between them, i.e., $y \in  5L_y/10, 7L_y/10)$.
The values of $Q$ {are} computed using local RVE simulations.
In particular, we set the pressure to be one and compute
$Q$ as the flux over the pressure difference in the fracture
and the average pressure in the matrix (see \cite{karimi2016general}).

{In Figure \ref{sol-dpQ-cp}{,} we plot both the fine-scale and the coarse-scale
solutions at the final time, and
in Table \ref{tab:dpQ-cp}, we report the relative errors for GMsFEM and simplified basis functions.
The numerical results are obtained using the parameters as follows:
$c_f = c_m = 0.1$, $\kappa_f = 10^{-3}$, $\kappa_m = 10^{-7}$ and $c_F = 0.01$,
$\kappa_F = 10^{3}$ for {the} fracture.
The resolved fracture network is implemented using DFN
with $80\%$ in $c_f$ and $20\%$ to $c_m$.
In addition, we take the transfer function
$Q = 250 \cdot \kappa_m$ and $T_{max} = 5000$.
The final time solution plots are shown Figure \ref{sol-dpQ-cp},
and the corresponding relative errors are reported in Table \ref{tab:dpQ-cp},
where the results are shown when the basis are
computed in a coupled way and
separately for each continuum using flow equation (without transfer functions).
Based on these results, we conclude that the GMsFEM using coupled basis
functions provides better accuracy compared to that computed with
un-coupled basis functions. Furthermore, we observe
that when choosing $6$ basis functions per coarse node, we can obtain an excellent
result using the GMsFEM.
On the other hand, we tested the performance of our method using simplified basis functions,
and observed a similar accuracy.
Finally,
we observe large errors if
we do not choose eigenvectors corresponding to very small
eigenvalues.}

\begin{figure}[!ht]
  \centering
  \includegraphics[width=0.49 \textwidth]{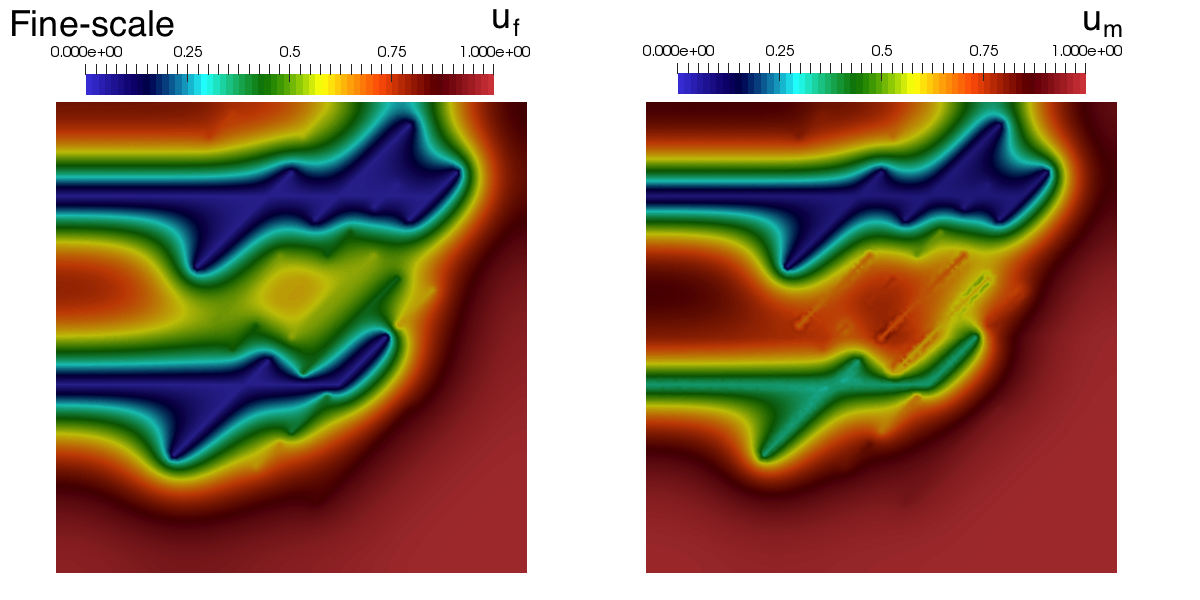}
  \includegraphics[width=0.49 \textwidth]{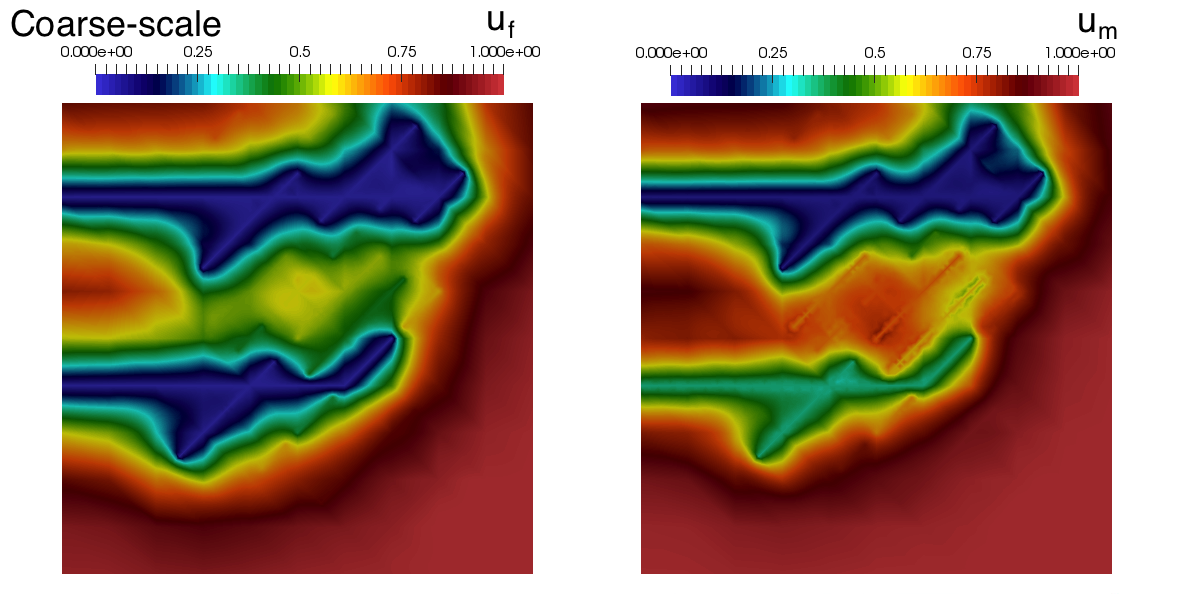}
  \caption{Dual-continuum background with $Q= Q(x)$.
  Left: Fine{-}scale solution $DOF = 17834$.
  Right: Coarse-scale solution $DOF = 536$ using simplified basis functions. $Q_1 = 250 \cdot \kappa_m$ and $Q_2 = 920 \cdot \kappa_m${.} }
  \label{sol-dpQ-cp}
\end{figure}

\begin{table}[!ht]
\begin{center}
\begin{tabular}{|c|c|c|c|c|c|c|}
\hline
$\mathcal{M}$  & dim($V_{\text{off}}$)
& $L^a_2(c_1)$ & $H^a_1(c_1)$
& $L^a_2(c_2)$ & $H^a_1(c_2)$ & $H^Q(c_1, c_2)$ \\
\hline
\multicolumn{7}{|c|}{Standard GmsFEM (un-coupled)}  \\ \hline
6 		& 1452 	& 0.837	& 1.925	&  0.837	& 30.542	& 13.869 \\
8 		& 1936 	& 0.293	& 0.358	&  0.293	& 26.427	& 11.912 \\
12 	& 2904		& 0.148	& 0.193	&  0.148	& 23.165	& 10.440  \\
16 	& 3872		& 0.108	& 0.089	&  0.108	& 17.945	& 8.086  \\
\hline
\multicolumn{7}{|c|}{Simplified basis functions (un-coupled)}  \\ \hline
all 	& 536 	& 2.343	& 3.870	& 2.343	& 36.649	& 16.872 \\
\hline
\multicolumn{7}{|c|}{Standard GmsFEM (coupled)}  \\ \hline
6		& 1452 	& 1.944	& 2.584	&  	1.944	& 6.942	& 3.934  \\
8		& 1936 	& 1.070	& 1.200	&  	1.070	& 2.197	& 1.452 \\
12	& 2904 	& 0.359	& 0.544	&  	0.359	& 0.788	& 0.606 \\
16	& 3872 	& 0.129	& 0.105	&  	0.129	& 0.375	& 0.193 \\
\hline
\multicolumn{7}{|c|}{Simplified basis functions  (coupled)}  \\ \hline
all 	& 830		&	 2.105	& 3.399	& 	 	2.105	& 4.122	& 3.557  \\
\hline
\end{tabular}
\end{center}
\caption{Dual-continuum background. Numerical results of relative
  errors (\%) at the final simulation time. $DOF_f = 17834$. $Q_1 = 250 \cdot \kappa_m$ and $Q_2 = 920 \cdot \kappa_m$.}
\label{tab:dpQ-cp}
\end{table}

\begin{remark}

{We remark that the RVE can be used} to approximate the effective properties.
To show this example, we assume that in each fine-grid, the multi-continua
can be resolved. In this case, we construct multiscale basis functions
via local spectral decomposition in the form
\[
\phi_{i,\text{fine}}^{\omega_j}=\chi^{\omega_i}_{\text{fine}} \psi_{j,\text{fine}}.
\]
As we discussed above, this equation is a multi-continua model obtained via the GMsFEM and can be related to multi-continua (\ref{eq:multi_cont}) by constructing {the} basis for {the} corresponding continua.

When using RVEs, the main challenge is to define
\[
\int_D \kappa \nabla \phi_{i,\text{fine}}^{\omega_j} \cdot \nabla \phi_{m,\text{fine}}^{\omega_l} \, {dx}
\]
using RVE computations. This is based on {a} localization assumption, which we introduce next.

We consider $\mathcal{H}^{\omega}$, which is {the} harmonic expansion in $\omega$,
which is defined by solving local problems in each $K$. We
 can use
\[
\int_D \kappa \nabla \mathcal{H}^{\omega_j}(\phi_{i,\text{fine}}^{\omega_j}) \cdot \nabla \mathcal{H}^{\omega_l}(\phi_{m,\text{fine}}^{\omega_l}) \, {dx}
\]
to approximate the elements of the stiffness matrix.
Our localization assumption uses the local snapshots computed in
the RVE for each $\omega_i$, which we denote by RVE$_i$. We denote these
RVE snapshots {by} $ \psi^{RVE_i}_{j,\text{fine}}$. Then, we propose
the following localization assumption
\[
\int_D \kappa \nabla \mathcal{H}^{\omega_j}(\phi_{i,\text{fine}}^{\omega_j}) \cdot \nabla \mathcal{H}^{\omega_l}(\phi_{m,\text{fine}}^{\omega_l}) \, {dx}
\approx
\int_D \kappa \nabla \mathcal{H}^{RVE_j}(\chi^{\omega_j}_{\text{fine}} \psi^{RVE_j}_{i,\text{fine}})
\cdot \nabla \mathcal{H}^{RVE_l} (\chi^{\omega_l}_{\text{fine}} \psi^{RVE_l}_{m,\text{fine}}) \, {dx}.
\]

\end{remark}

\subsection{Numerical simulation of the shale gas transport}

In this section, we add a case study for our
method. We follow the example considered in \cite{aevw16}, where
a shale gas transport with dual-continuum (organic and inorganic pores)
(see also \cite{akkutlu2012multiscale}) is studied.
In inorganic matter, we have
\[
\varphi_i \frac{\partial c}{\partial t} =
\div ( (\varphi_i D_i + c ZRT \frac{\kappa_i}{\mu} ) \nabla c) + Q_{ki}.
\]
where $\varphi_i$ is the inorganic porosity, $D_i$ is the tortuosity corrected coefficient of diffusive molecular transport in the inorganic matrix, $\kappa_i$ is the inorganic matrix absolute permeability, $\mu$ is the dynamic gas viscosity, $p_i$ is the inorganic matrix pressure, $p = c ZRT$  and $Q_{ki}$ is the transfer function.

Here, we use the following transfer function
\[
Q_{ki} = \tau_{ki} \sigma (c_k - c), \quad
\tau_{ki} = \varphi_k D_k + (1 - \varphi_k) D_s F'.
\]
where $\sigma$ is a shape factor.

For free and adsorbed gas in the kerogen ($c_k$ and $c_{\mu}$)
\[
(\varphi_k + (1 - \varphi_k) F') \frac{\partial c_k}{\partial t} =
\div ( (\varphi_k D_k + (1 - \varphi_k) D_s F'  + c_k ZRT \frac{\kappa_k}{\mu} ) \nabla c_k) - Q_{ki}.
\]
where $\varphi_k$ is the kerogen porosity, $D_k$ is the tortuosity corrected coefficient of diffusive molecular transport for the free gas in kerogen, $D_s$ is the coefficient of diffusive molecular transport for the adsorbed gas in kerogen, $\kappa_k$ is the kerogen permeability, and $p_k$ is the kerogen pressure. For $c_{\mu}$ we use linear Henry’s isotherm $c_{\mu} = F(c_k)$, $F(c_k) = k_{H} c_k$.

For free-gas in fracture network, we have
\[
\varphi_f \frac{\partial c_f}{\partial t} =
\div (c_f ZRT \frac{\kappa_f}{\mu} \nabla c_f).
\]
where $\varphi_f$ is the fracture porosity, $K_L$ is the diffusion coefficient,
$\kappa_f$ is the fracture absolute permeability, and $p_f$ is the fracture pressure.

\begin{figure}[!ht]
  \centering
  \includegraphics[width=0.6 \textwidth]{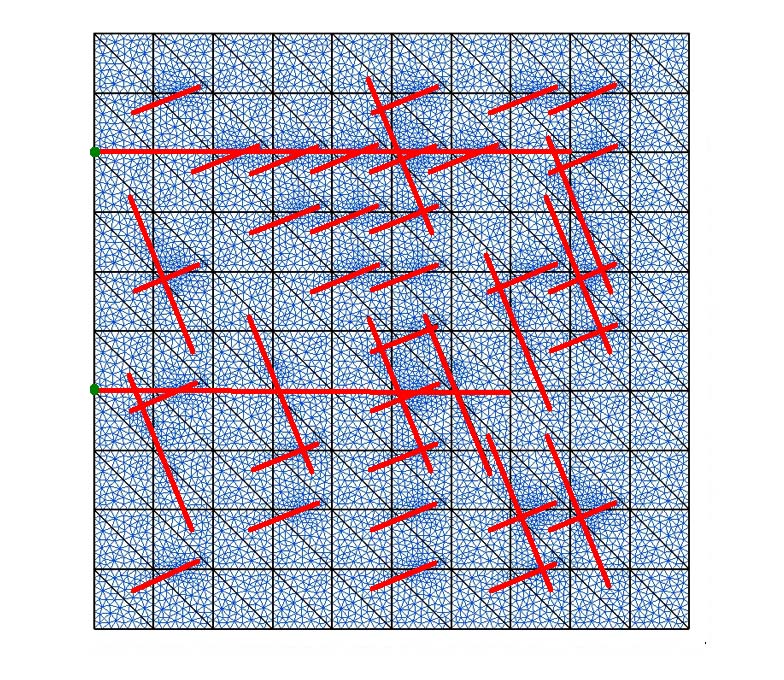}
  \caption{Computational mesh with fractures for shale gas transport. }
  \label{dp-sh-mesh}
\end{figure}

We consider the model geometry with discrete fracture distribution as shown in Figure \ref{dp-sh-mesh}. The coarse grid is uniform and contains $121$ vertices and $200$ coarse cells. The domain $D$ has a length of 50 meters in both directions. The other model parameters used are as follows.
 $R = 8.31$[J/(K $\cdot$ mol)], $T = 323.0$[K], $Z = 1.0$,
 $p_i = 20 \cdot  10^6$[Pa], $p_{well} = 5 \cdot  10^6$[Pa], $p_L = 10^6$[Pa],
 $c_{init} = p_i/(ZRT)$[mol/m$^3$], $c_{well} = p_{well}/(ZRT)$[mol/m$^3$],
 $\varphi_i = 0.025$, $\varphi_k = 0.025$, $\varphi_f = 0.01$
 $\kappa_i = 10^{-19}$[m$^2$], $\kappa_k = 0.0$[m$^2$], $\kappa_{nf} = 10^{-14}$[m$^2$], $\kappa_{hf} = 10^{-13}$[m$^2$],
 $D_s = D_i = D_k = 10^{-8}$[m$^2$/s],
 $k_H = 0.1$, $\mu = 10^{-5}$[Pa $\cdot$ s].
For transfer functions, we set $\sigma = 10.0$ [1/m$^2$].

As we remarked, the purpose of this example is to show the geo-application of our approach. In Figure \ref{sol-dp-sh}, we depict the solutions at the final time $T_{max} = 500$ days.
We observe that the GMsFEM with simplified basis functions provides
a good agreement. In this case, we have observed less than
$1$\% in $L^2$ norm.

\begin{figure}[!ht]
  \centering
  \includegraphics[width=0.49 \textwidth]{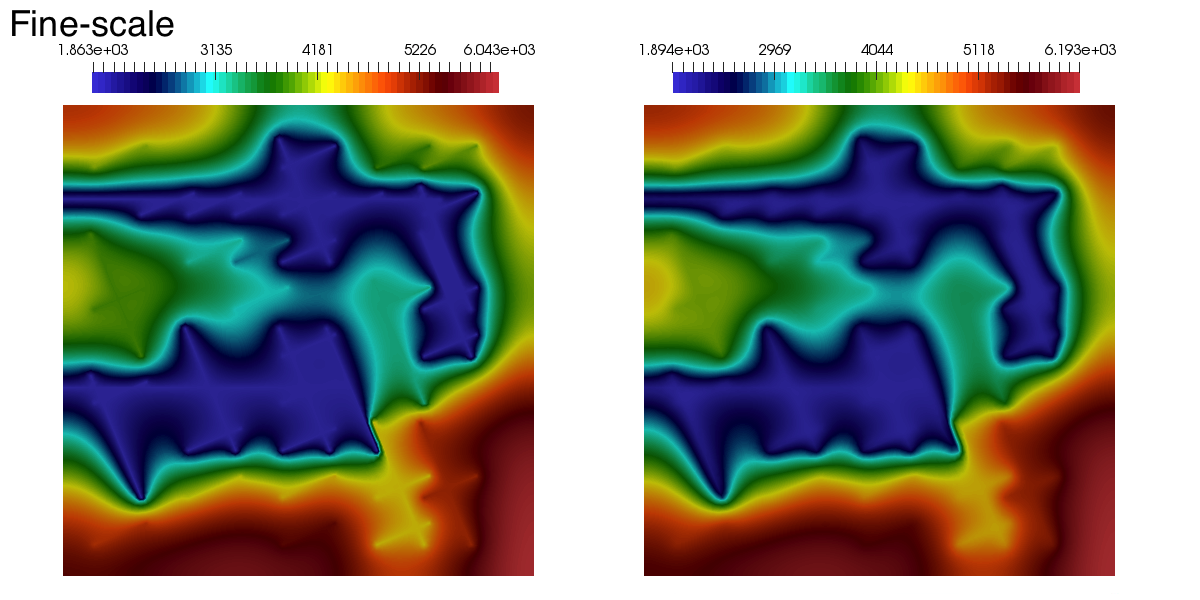}
  \includegraphics[width=0.49 \textwidth]{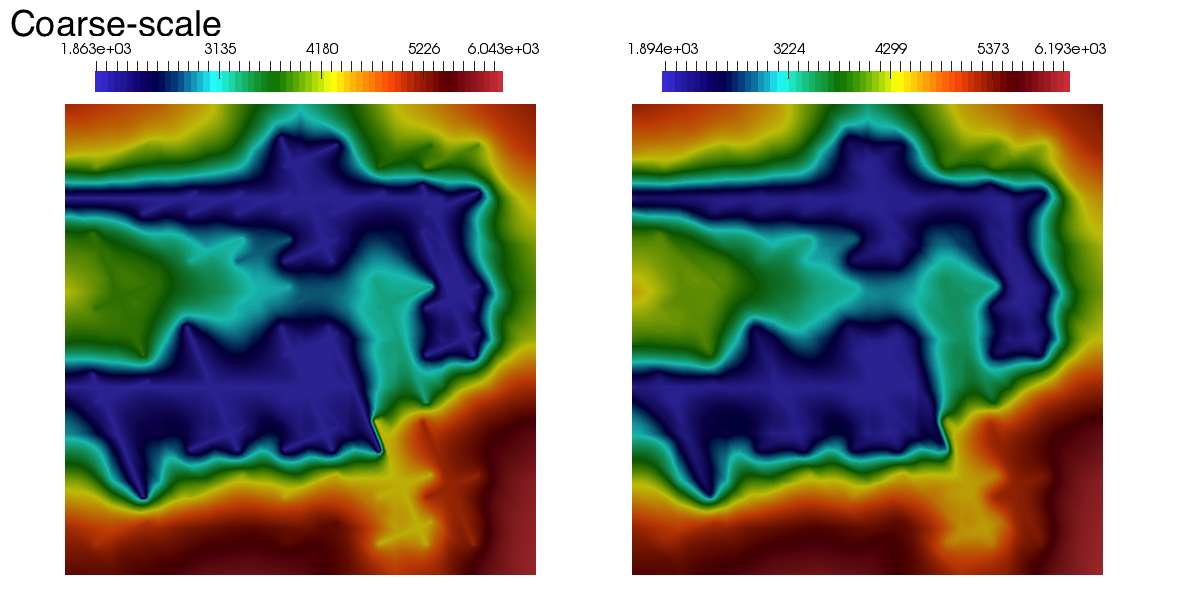}
  \caption{Dual-continuum background for shale gas transport.
  Left: Fine-scale solution $DOF = 18064$.
  Right: Coarse-scale solution $DOF = 938$ using simplified basis functions.}
  \label{sol-dp-sh}
\end{figure}

\section{Conclusions}

In this paper, our goals are:
(1) to investigate the GMsFEM for fractured media;
(2) to study the relation between the GMsFEM and the multi-continuum
approaches; (3) to develop a coupled GMsFEM and multi-continuum
approaches for highly heterogeneous fractured media.
First, we show that GMsFEM basis functions represent each continuum
and these multiscale basis functions correspond to
the eigenvectors associated with very small eigenvalues.
We propose simplified basis functions when fracture geometries are
simple. Multiscale basis functions contain the spatial information
representing the interaction between
the matrix and the fractures. Numerical results show that
the GMsFEM can provide an accurate solution if we include
multiscale basis functions corresponding to very small eigenvalues.
The latter represents the number of the continua in each coarse block.
In the second part of the paper, we develop a coupled GMsFEM and
multi-continuum approaches. In this case, fractures at the fine-subgrid
are represented by {a} multi-continuum approach. As a result,
{the GMsFEM is needed} for a system of equations. In this case, we use un-coupled
and coupled basis functions. In the latter, the multiscale basis functions
are constructed for subgrid multi-continuum media in a coupled fashion.
We present numerical results, where we compute the parameters of the multi-continua
from a subgrid problem. Our numerical results show that {the GMsFEM is able to give solutions
with good accuracy.}
The accuracy is better when using coupled basis functions.

\appendix

\section{Convergence analysis}
\label{appendix}

In this appendix, we present the convergence analysis of our schemes.
We will consider both the un-coupled multiscale basis functions
and the coupled multiscale basis functions
as well as an abstract formulation to be defined in the following.
Note that the abstract formulation can be applied to the practical cases presented in this paper.
We consider the $N$-continuum problem: find $u=(u_{1},u_{2},\cdots u_{N})$ such that $u_i(t,\cdot) \in H^1(\Omega)$, $i=1,\cdots, N$, and
\begin{align}
\label{eq:ref}
\sum_{i}c_i \left(\cfrac{\partial u_i}{\partial t},v_i \right) & =-\sum_{i}a_{i}(u_{i},v_{i})+q(u,v)+(f,v),  \quad\quad t\in(0,T)
\end{align}
for all test functions $v=(v_1,v_2,\cdots, v_N)$ with $v_i(t,\cdot)\in H^1_0(\Omega)${,} where
\begin{equation*}
\begin{split}
c_i(u,v)&=\int_{D_m}c_i \,u \, v \, {dx} +  \sum_j \int_{D_{f,j}}c_{j,i} \, u \, v  \, {dx},  \\
q(u,v)&=\sum_{j}\sum_{i\neq j}Q_{i}\int_{D} (u_{i}-u_{j})v_{j}  \, {dx}, \\
a_{i}(u,v)&=\int_{D_m}\kappa_{i}\nabla u\cdot\nabla v  \, {dx}+\sum_j \int_{D_{f,j}}\kappa_{j,i}\nabla_{f}u\cdot\nabla_{f}v  \, {dx}.
\end{split}
\end{equation*}
{Note} that all the summations are summing over all continu{a}, that is, they are summing over $i,j=1,2,\cdots, N$.
Next we define two global bilinear operators $c(\cdot,\cdot)$ and $a(\cdot,\cdot)$ by
\begin{equation*}
c(u,v)=\sum_i c_i(u_i,v_i), \quad\quad a(u,v) = \sum_i a_i(u_i,v_i).
\end{equation*}
Clearly, we have $q(u,v)=q(v,u)$ and $q(u,u)\leq0$ for all $u(t,\cdot),v(t,\cdot) \in[H^{1}(\Omega)]^{N}$.
{Equation} (\ref{eq:ref}) defines our multi-continuum problem.

We next define the operator $a_{i}^{(j)}(\cdot,\cdot)$ by
\begin{equation*}
a_{i}^{(j)}(u,v)=\left(\int_{\omega_{j}}\kappa_{i}\nabla u\cdot\nabla v  \, {dx}+\sum_l \int_{D_{f,l}\cap \omega_j}\kappa_{l,i}\nabla_{f}u\cdot\nabla_{f}v  \, {dx} \right)
\end{equation*}
for all $u(t,\cdot),v(t,\cdot) \in H_{0}^{1}(\omega_{j})$.
This operator corresponds {to} the contribution of $a_i(u,v)$ {in} the coarse region $\omega_j$. We also define
the corresponding global operator
\begin{equation*}
a^{(j)}(u,v)=\sum_{i}a_{i}^{(j)}(u_{i},v_{i}){.}
\end{equation*}

Finally,
we define two bilinear operators $a_{Q}^{(j)}(\cdot,\cdot)$
and $a_{Q}(\cdot,\cdot)$ by
\begin{equation*}
a_{Q}^{(j)}(u,v)=a^{(j)}(u,v)-q(u,v), \quad\quad a_{Q}(u,v)=a(u,v)-q(u,v)
\end{equation*}
for all $u(t,\cdot),v(t,\cdot) \in H_{0}^{1}(\omega_{j})$.

In the following, we will present the definitions
of the un-coupled multiscale basis functions
and the coupled multiscale basis functions.
For each case, we follow the general procedure
to first construct a local snapshot space for each coarse region $\omega_j$,
and then construct an offline space (consisting of multiscale basis functions) using a suitable spectral problem
defined on the snapshot space.
Note that the snapshot functions and the basis functions are independent of time.

\subsection*{Coupled GMsFEM (snapshot space)}

For each coarse region $\omega_{j}$, we obtain the $k$-th snapshot function by
solving the following local problem:
find $\psi_{k}^{(j),\text{snap}}\in[V_{h}(\omega_{j})]^{N}$ such
that
\begin{align*}
a^{(j)}(\psi_{k}^{(j),\text{snap}},v)-q(\psi_{k}^{(j),\text{snap}},v) & =0,\quad\forall v\in[V_{h,0}(\omega_{j})]^{N}, \\
\text{with the boundary condition}\quad \psi_{k}^{(j),\text{snap}} & =\delta_{k}, \quad \text{on } \partial\omega_j,
\end{align*}
where
$V_{h}(\omega_{j})$ is a fine{-}scale space and $V_{h,0}(\omega_{j})$
is the subspace of $V_{h}(\omega_{j})$ containing functions with zero trace on the boundary of $\omega_j$.
In the above definition, the discrete delta function $\delta_{k}$
is defined as $\delta_{k} = (\delta_{k,1}, \delta_{k,2},\cdots, \delta_{k,N})$
and each $\delta_{k,i}$ is the discrete delta function such that $\delta_{k,i} = 1$  at the fine{-}grid node $x_k\in\partial\omega_j$ and $\delta_{k,i} = 0$
at all other fine{-}grid nodes on $\partial\omega_j$.
Using the above snapshot functions,
we can define the local snapshot space by
\begin{equation*}
V_{\text{snap}}(\omega_{j})=\text{span}\{\psi_{k}^{(j),\text{snap}} \; : \; \forall k\}.
\end{equation*}

\subsection*{Coupled GMsFEM (offline space)}

We will construct the offline space in this section. The offline space is spanned by all multiscale basis functions.
To find the multiscale basis functions, we use a local spectral problem defined in the snapshot space.
More precisely,
for each coarse region $\omega_{j}$, we consider the following local eigenvalue
problem: find the $k$-th eigenfunction
$\phi_{k}^{(j)}\in V_{\text{snap}}(\omega_{j})$ and the $k$-th eigenvalue $\lambda^{(j)}_k$
such that
\begin{align*}
a_Q^{(j)}(\phi_{k}^{(j)},v) & =\lambda_{i}^{(j)} \, s^{(j)}(\phi_{k}^{(j)},v), \quad\;\forall v\in V_{\text{snap}}(\omega_{j}),
\end{align*}
where the bilinear form $s^{(j)}$ is defined as
\begin{equation*}
s^{(j)}(u,v)=\sum_{i}\left(\int_{\omega_{j}}\kappa_{i}|\nabla\chi_{j}|^{2}u \, v \, {dx}+\sum_l \int_{D_{f,l}}\kappa_{l,i}|\nabla_{f}\chi_{j}|^{2} u \, v \, {dx} \right)
\end{equation*}
and the eigenvalues are arranged in ascending order.
Using the eigenfunction $\phi_{k}^{(j)}$, we can define the $k$-th multiscale basis function
by $\hat{\phi}_{k}^{(j)}=\chi_{j} \phi_{k}^{(j)}$,
where $\{ \chi_j\}$ is a set of partition of unity functions
for the coarse{-}grid partition of the domain $\Omega$.
Finally,
the
local offline space is defined by $V_{H}(\omega_{j})=\text{span}\{\hat{\phi}_{i}^{(j)}|\; i\leq L_j\}$,
which is formed by using the first $L_j$ eigenfunctions.
In addition,
the global offline space, $V_{H}$, is defined by $V_{H}=\sum_j V_{H}(\omega_{j})$.

\subsection*{Un-Coupled GMsFEM (snapshot space)}

Now, we will present the construction of {the} basis for the un-coupled case.
We first consider the construction of the snapshot space.
For each coarse region $\omega_{j}$ and for each continuum $i$,
we obtain the $k$-th snapshot function by
solving the problem:
find $\psi_{k,i}^{(j),\text{snap}}\in V_{h}(\omega_{j})$ such that
\begin{align*}
a_{i}^{(j)}(\psi_{k,i}^{(j),\text{snap}},v) & =0, \quad \;\forall v\in V_{h,0}(\omega_{j}),\\
\text{with the boundary condition}\quad\psi_{k,i}^{(j),\text{snap}} & =\delta_{k,i}, \quad \text{on } \partial\omega_j.
\end{align*}
Then,
 the local snapshot space for the coarse region $\omega_j$ and for the
 $i$-th continuum
 is defined by
 \begin{equation*}
  V_{\text{snap}}^{(i)}(\omega_{j})=\text{span}\{\psi_{k,i}^{(j),\text{snap}} \; : \; \forall k \}, \quad i=1,2,\cdots, N.
\end{equation*}

\subsection*{Un-Coupled GMsFEM (offline space)}

We will construct multiscale basis functions for each coarse region $\omega_j$
and for each continuum $i$. To do so, we consider
the following local eigenvalue
problem: find the $k$-th eigenfunction $\phi_{k,i}^{(j)}\in V_{\text{snap}}^{(i)}(\omega_{j})$
and the $k$-th eigenvalue $\lambda_{k,i}^{(j)}$
such that
\begin{align*}
a_{i}^{(j)}(\phi_{k,i}^{(j)},v) & =\lambda_{k,i}^{(j)}\, s_{i}^{(j)}(\phi_{k,i}^{(j)},v), \quad\;\forall v\in V_{\text{snap}}^{(i)}(\omega_{j}),
\end{align*}
where
\begin{equation*}
s_{i}^{(j)}(u,v)=\int_{\omega_{j}}\kappa_{i}|\nabla\chi_{j}|^{2}u \, v \, {dx} +\sum_l \int_{D_{f,l}}\kappa_{l,i}|\nabla_{f}\chi_{j}|^{2}u \, v \, {dx}.
\end{equation*}
We assume that the eigenvalues are arranged in ascending order.
Using the above eigenfunctions, we can define the $k$-th multiscale basis function by $\hat{\phi}_{k,i}^{(j)}=\chi_{j}\phi_{k,i}^{(j)}$.
To define the offline space for the $i$-th continuum and for the coarse region $\omega_j$, we take the first $L_j$ eigenfunctions
and define $V_{H}^{(i)}(\omega_{j})=\text{span}\{\hat{\phi}_{k,i}^{(j)}|\; k\leq L_j\}$.
Note that $L_j$ can depend on $i$, but we omit this index to simplify the notation.
Then the global offline space for the $i$-th continuum is given by $V_{H}^{(i)}=\sum_j V_{H}^{(i)}(\omega_{j})$.
Finally, the offline space, $V_{H}$, is defined by $V_{H}=V_{H}^{1}\times V_{H}^{2}\times\cdots\times V_{H}^{N}$.

\subsection*{Analysis}

Now we are ready to present the analysis. We will first prove
the following best approximation estimates (see Lemma \ref{lem:best} and Lemma \ref{lem:best2}).
We will compare the difference between the reference solution $u$, defined by (\ref{eq:ref}),
and the multiscale solution $u_{ms}\in V_H$ defined by
\begin{align}
\label{eq:ms}
\sum_{i}c_i \left(\cfrac{\partial u_{ms,i}}{\partial t},v_i \right) & =-\sum_{i}a_{i}(u_{ms,i},v_{i})+q(u_{ms},v)+(f,v), \;\forall v\in V_H, \, t\in(0,T).
\end{align}
We also define the following norms
\begin{equation*}
\|u\|_c^2 =  c(u,u), \quad \|u\|_a^2 = a(u,u), \quad
\|u\|_{a_Q}^2 = a_Q(u,u).
\end{equation*}

\begin{lemma}
\label{lem:best}
Let $u$ be the reference solution defined in (\ref{eq:ref}) and $u_{ms}$ be the multiscale
numerical solution defined in (\ref{eq:ms}). We have
\begin{equation}
\label{lem:best1}
\begin{split}
&\: \|u(t,\cdot)-u_{ms}(t,\cdot)\|_{c}^2+\int_{0}^{T}\|u-u_{ms}\|_{a_{Q}}^{2} {dt}  \\
\leq &\: C\inf_{w\in V_{H}}\Big(\int_{0}^{T}\Big\|\cfrac{\partial(w-u)}{\partial t}\Big\|_{c}^2 {dt} +
\int_{0}^{T}\|w-u\|_{a_{Q}}^{2} {dt}  +\|w(0,\cdot)-u(0,\cdot)\|_{c}^2 \Big).
\end{split}
\end{equation}
\end{lemma}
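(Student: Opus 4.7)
The plan is to adapt the classical energy/Céa argument for Galerkin methods to the coupled multi-continuum system. Two structural facts drive the argument: (i) subtracting (\ref{eq:ms}) from (\ref{eq:ref}), restricted to test functions $v\in V_H$, yields the Galerkin orthogonality
$$c(\partial_t(u-u_{ms}),v) + a_Q(u-u_{ms},v) = 0, \quad \forall v\in V_H;$$
and (ii) the form $a_Q=a-q$ is symmetric positive semidefinite on $[H^1(\Omega)]^N$, since $a$ is manifestly symmetric and nonnegative, $q$ is symmetric by the stated identity $q(u,v)=q(v,u)$, and $q(u,u)\le 0$. Consequently, both $c(\cdot,\cdot)$ and $a_Q(\cdot,\cdot)$ obey Cauchy--Schwarz, which is what makes the energy estimate go through.

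Given an arbitrary $w(t,\cdot)\in V_H$, I would decompose the error as $u-u_{ms}=\eta+\xi$ with $\eta:=u-w$ and $\xi:=w-u_{ms}\in V_H$. Substituting this into the orthogonality identity and testing with the admissible choice $v=\xi$ gives
$$\tfrac{1}{2}\tfrac{d}{dt}\|\xi\|_c^2 + \|\xi\|_{a_Q}^2 = -c(\partial_t\eta,\xi) - a_Q(\eta,\xi).$$
Applying Cauchy--Schwarz to each right-hand-side term, followed by Young's inequality (with a weight chosen so that the $\xi$-factors may be absorbed into the LHS), yields the differential inequality
$$\tfrac{d}{dt}\|\xi\|_c^2 + \|\xi\|_{a_Q}^2 \le \|\xi\|_c^2 + \|\partial_t\eta\|_c^2 + \|\eta\|_{a_Q}^2.$$

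Integrating in time on $[0,T]$ and invoking Grönwall's inequality to dispose of the $\int_0^t\|\xi\|_c^2\,ds$ term produces
$$\|\xi(t)\|_c^2 + \int_0^T\|\xi\|_{a_Q}^2\,ds \le C\Big(\|\xi(0)\|_c^2 + \int_0^T\big(\|\partial_s\eta\|_c^2 + \|\eta\|_{a_Q}^2\big)\,ds\Big).$$
The triangle inequalities $\|u-u_{ms}\|_\star\le\|\eta\|_\star+\|\xi\|_\star$ for $\star\in\{c,a_Q\}$ then convert this bound on $\xi$ into the target estimate. The stray $\|\eta(t)\|_c^2$ term that appears after the triangle step is controlled by $\|\eta(t)\|_c^2\le 2\|\eta(0)\|_c^2 + 2T\int_0^T\|\partial_s\eta\|_c^2\,ds$, so it folds cleanly into the right-hand side of (\ref{lem:best1}).

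For the initial datum, I would take $u_{ms}(0)$ to be the $c$-orthogonal projection of $u(0)$ onto $V_H$; then by the best-approximation property $\|\xi(0)\|_c=\|w(0)-u_{ms}(0)\|_c\le 2\|w(0)-u(0)\|_c$, which is the shape demanded by (\ref{lem:best1}). Taking the infimum over $w\in V_H$ completes the argument. The principal delicate point is checking that the off-diagonal coupling $q$ does not destroy the positive semidefiniteness of $a_Q$ nor the applicability of Cauchy--Schwarz on it; this is exactly what the assumptions $q(u,v)=q(v,u)$ and $q(u,u)\le 0$ purchase. Once those are in hand the remainder is a routine parabolic Galerkin energy estimate.
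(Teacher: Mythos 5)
Your proposal is correct and follows essentially the same route as the paper: Galerkin orthogonality, testing with $\xi=w-u_{ms}$, Cauchy--Schwarz in the $c$- and $a_Q$-norms (justified by the symmetry of $q$ and $q(v,v)\le 0$), and integration in time. You merely make explicit the Young/Gr\"onwall absorption, the triangle-inequality step, and the treatment of the initial datum, all of which the paper's terser proof leaves implicit.
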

\begin{proof}
We write $u_{ms} = (u_{ms,1},\cdots, u_{ms,N})${,} where $u_{ms,i}$ is the component
for the $i$-th continuum.
Using (\ref{eq:ref}) and (\ref{eq:ms}), we have
\begin{align*}
c \left(\cfrac{\partial(u-u_{ms})}{\partial t}, v \right)+\sum_{i}a_{i}(u_{i}-u_{ms,i},v)-q(u-u_{ms},v) & =0,\quad\;\forall v\in V_{H}, \, t\in(0,T).
\end{align*}
Let $w\in V_H$ and $v=w-u_{ms}$ in the above equation, we obtain
\begin{align*}
 & \: c \left(\cfrac{\partial(w-u_{ms})}{\partial t},w-u_{ms} \right)+\sum_{i}a_{i}(w_{i}-u_{ms,i},w_{i}-u_{ms,i})-q(w-u_{ms},w-u_{ms})\\
= &\: c \left(\cfrac{\partial(w-u)}{\partial t},w-u_{ms} \right)+\sum_{i}a_{i}(w-u_{i},w-u_{ms,i})-q(w-u,w-u_{ms})\\
\leq & \: \Big \|\cfrac{\partial(w-u)}{\partial t}\Big\|_{c}\|w-u_{ms}\|_{c}+\|w-u\|_{a_{Q}}\|w-u_{ms,}\|_{a_{Q}}.
\end{align*}
Therefore, integrating the above in time,
we obtain (\ref{lem:best1}).
\end{proof}

In the next lemma, we prove a similar result as (\ref{lem:best1}) by
assuming an additional condition on $q$, namely,
\begin{equation}
\label{eq:assume}
-q(v,v)\leq D\|v\|_{a}^{2}, \quad \forall \quad
v\in[H^{1}(\Omega)]^{N}.
\end{equation}

\begin{lemma}
\label{lem:best2}
Assume that $-q(v,v)\leq D\|v\|_{a}^{2}, \forall
 v\in[H^{1}(\Omega)]^{N}$.
For the same $u$ and $u_{ms}$ as in Lemma \ref{lem:best},
we have
\begin{align*}
&\: \|u(t,\cdot)-u_{ms}(t,\cdot)\|_{c}^{2}+\int_{0}^{T}\|u-u_{ms}\|_{a}^{2} \, {dt} \\
\leq &\: C\inf_{w\in V_{H}} \Big(\int_{0}^{T}\Big\|\cfrac{\partial(w-u)}{\partial t}\Big\|_{c}^{2} \, {dt}+
(D+1)\int_{0}^{T}\|w-u\|_{a}^{2} \, {dt} +\|w(0,\cdot)-u(0,\cdot)\|_{c}^{2}\Big).
\end{align*}
\end{lemma}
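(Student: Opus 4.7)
The plan is to deduce Lemma \ref{lem:best2} as a direct corollary of Lemma \ref{lem:best} by exploiting a two-sided norm equivalence between $\|\cdot\|_a$ and $\|\cdot\|_{a_Q}$ that is made available precisely by the extra hypothesis (\ref{eq:assume}). The key observation is that since $q$ is symmetric and $q(v,v)\le 0$ for all admissible $v$, the bilinear form $-q$ is symmetric positive semi-definite, so on one hand $a_Q(v,v)=a(v,v)-q(v,v)\ge a(v,v)$, and on the other hand, by the assumption, $-q(v,v)\le D\|v\|_a^2$, so $a_Q(v,v)\le (1+D)\|v\|_a^2$. Hence
\[
\|v\|_a^2 \;\le\; \|v\|_{a_Q}^2 \;\le\; (1+D)\,\|v\|_a^2
\]
for every $v\in [H^1(\Omega)]^N$.

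With this equivalence in hand, I would apply Lemma \ref{lem:best} directly. First, on the left-hand side of (\ref{lem:best1}), I would replace $\int_0^T \|u-u_{ms}\|_{a_Q}^2\,dt$ by the smaller quantity $\int_0^T \|u-u_{ms}\|_{a}^2\,dt$ using the first (free) inequality above. Second, on the right-hand side, I would estimate the best-approximation term as
\[
\int_0^T \|w-u\|_{a_Q}^2\,dt \;\le\; (1+D)\int_0^T \|w-u\|_a^2\,dt,
\]
using the assumption (\ref{eq:assume}). The $c$-norm term and the initial-data term on the right-hand side are left untouched. Taking the infimum over $w\in V_H$ then produces exactly the stated bound, with the factor $(D+1)$ appearing in front of the $\|\cdot\|_a$ best-approximation integral.

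Alternatively, one could mimic the proof of Lemma \ref{lem:best} step-by-step: subtract (\ref{eq:ms}) from (\ref{eq:ref}) to obtain Galerkin orthogonality against $V_H$; for arbitrary $w\in V_H$, test with $v=w-u_{ms}$ and split $u-u_{ms}=(u-w)+(w-u_{ms})$; the new ingredient on the left is that $-q(w-u_{ms},w-u_{ms})\ge 0$ can simply be dropped (instead of yielding $a_Q$ control), and on the right one uses the Cauchy--Schwarz inequality for the semi-definite form $-q$ together with (\ref{eq:assume}) to bound $|q(w-u,w-u_{ms})|\le D\|w-u\|_a\|w-u_{ms}\|_a$; combining with the Cauchy--Schwarz bounds for the $c$- and $a$-terms and applying Young's inequality absorbs the $\|w-u_{ms}\|_a$ factors into the LHS, after which integration in time yields the desired estimate.

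The only technical point worth highlighting — and essentially the only obstacle — is the careful use of Cauchy--Schwarz for the indefinite form $q$: because $-q$ is symmetric positive semi-definite one obtains $|q(a,b)|\le \sqrt{-q(a,a)}\sqrt{-q(b,b)}\le D\|a\|_a\|b\|_a$, which is what powers both versions of the argument. Everything else is mechanical, and no Gr\"onwall-type estimate is needed beyond what is already implicit in Lemma \ref{lem:best}.
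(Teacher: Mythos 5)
Your primary argument is exactly the paper's proof: since $q(v,v)\le 0$ gives $\|v\|_a^2\le\|v\|_{a_Q}^2$ and the assumption gives $\|w-u\|_{a_Q}^2\le(D+1)\|w-u\|_a^2$, the result follows by applying Lemma \ref{lem:best} and adjusting both sides of the estimate. The proposal is correct and takes essentially the same approach as the paper; the alternative route you sketch (redoing the Galerkin argument with Cauchy--Schwarz for the semi-definite form $-q$) is unnecessary here but would also work.
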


\begin{proof}
Since $\int_{0}^{T}\|u-u_{ms}\|_{a}^{2} \, {dt} \leq \int_{0}^{T}\|u-u_{ms}\|_{a_{Q}}^{2} \, {dt}$
and $-q(v,v)\leq D\|v\|_{a}^{2}$, we have
\begin{align*}
&\: \|u(t,\cdot)-u_{ms}(t,\cdot)\|_{c}^{2}
+\int_{0}^{T}\|u-u_{ms}\|_{a}^{2} \, {dt} \\
\leq
&\: \|u(t,\cdot)-u_{ms}(t,\cdot)\|_{c}^{2}
+\int_{0}^{T}\|u-u_{ms}\|_{a_{Q}}^{2} \, {dt}\\
  \leq
&\: C\inf_{w\in V_{H}}\Big(\int_{0}^{T}\Big\|\cfrac{\partial(w-u)}{\partial t}\Big\|_{c}^{2} \, {dt}
+\int_{0}^{T}\|w-u\|_{a}^{2}-q(w-u,w-u) \, {dt}
+\|w(0,\cdot)-u(0,\cdot)\|_{c}^{2}\Big)\\
  \leq
&\: C\inf_{w\in V_{H}}\Big(\int_{0}^{T}\Big\|\cfrac{\partial(w-u)}{\partial t}\Big\|_{c}^{2} \, {dt}
+(D+1)\int_{0}^{T}\|w-u\|_{a}^{2} \, {dt}
+\|w(0,\cdot)-u(0,\cdot)\|_{c}^{2}\Big).
\end{align*}
  This completes the proof.
\end{proof}


We will use the above two lemmas to prove the convergence of our scheme.
In particular, we need {to} find a suitable function $w\in V_H$
and estimate the difference $w-u$ in various norms.
The following is our strategy. We define the snapshot projection $u_{snap} \in V_{snap}$ by
\begin{align}
\label{eq:ms1}
u_{snap} = \sum_j \chi_j u^{(j)}_{snap},\quad \text{ {with} }  \; u^{(j)}_{snap}|_{\partial \omega_j} = u|_{\partial \omega_j} ,
\end{align}
where $V_{snap}$ is the snapshot space obtained by collecting all snapshot functions.
We note that, since the snapshot functions for each coarse region $\omega_j$
take all possible values on $\partial\omega_j$, the {problem} in (\ref{eq:ms1}) is well-defined.
Since
$w-u = w-u_{snap} + u_{snap}-u$, it suffices to estimates
the two terms $w-u_{snap}$ and $u_{snap}-u$.
{Note} that the term $u_{snap}-u$ corresponds {to} an irreducible error of our scheme, since this error
cannot be improved by using our scheme. We assume that this irreducible error is small
by using a large set of snapshot functions. Based on this argument,
it suffices to estimate $w-u_{snap}$ by choosing an appropriate function $w\in V_H$.

{Note} that $u_{snap}$ is in the snapshot space, which means that we can represent $u$
as a linear combination of all multiscale basis functions. To define $w\in V_H$,
we will take $w$ as the projection of $u_{snap}$ in the offline space. More precisely,
we use the following construction.
First, for the case of un-coupled basis functions, we can represent
\begin{equation}
\label{eq:snap1}
u_{snap} = (u_{snap,1},u_{snap,2},\cdots,u_{snap,N}), \quad
u_{snap,i} = \sum_{j}\sum_{k}c_{k,i}^{(j)}(t)\chi_{j}(x)\phi_{k,i}^{(j)}(x).
\end{equation}
Then the projection $w$ of $u$ in the offline space is defined as
\begin{equation}
\label{eq:off1}
w = (w_1,w_2,\cdots,w_N), \quad
w_i = \sum_{j}\sum_{k \leq L_j}c_{k,i}^{(j)}(t)\chi_{j}(x)\phi_{k,i}^{(j)}(x).
\end{equation}
Second, for the case of coupled basis functions, we can represent
\begin{equation}
\label{eq:snap2}
u_{snap}= \sum_{j}\sum_{k}c_{k}^{(j)}(t)\chi_{j}(x)\phi_{k}^{(j)}(x).
\end{equation}
Then the projection $w$ of $u_{snap}$ in the offline space is defined as
\begin{equation}
\label{eq:off2}
w = \sum_{j}\sum_{k \leq L_j}c_{k}^{(j)}(t)\chi_{j}(x)\phi_{k}^{(j)}(x).
\end{equation}

Next, we will state and prove the main results (Theorem \ref{thm1} and Theorem \ref{thm2})
of this appendix.
As we will see, Theorem \ref{thm1} and Theorem \ref{thm2}
follow from Lemmas \ref{lemma1}, \ref{lemma2}{,} and \ref{lemma3}.

\begin{theorem}
\label{thm1}
For the un-coupled GMsFEM, let $u$ and $u_{snap}$ be the reference solution and snapshot projection in (\ref{eq:ref}) and (\ref{eq:ms})
and let $w\in V_{H}$ be the projection of $u_{snap}$ defined in (\ref{eq:off1}). We assume (\ref{eq:assume}).
Then we have
\begin{equation*}
\begin{split}
&\: \int_{0}^{T}\Big\|\cfrac{\partial(w-u_{snap})}{\partial t}\Big\|_{c}^{2} \, {dt}
+\int_{0}^{T}\|w-u_{snap}\|_{a}^{2} \, {dt}
+\|w(0,\cdot)-u_{snap}(0,\cdot)\|_{c}^{2} \\
\leq
&\: \cfrac{C}{\Lambda_1}\left(\int_{0}^{T}\Big\|\cfrac{\partial u}{\partial t}\Big\|_{a}^{2} \, {dt}
+\int_{0}^{T}\|u\|_{a}^{2} \, {dt}
+\|u(0,\cdot)\|_{a}^{2}\right),
\end{split}
\end{equation*}
where $\Lambda_1=\min_{j,i}\{\lambda_{L_j+1,i}^{(j)}\}$.
\end{theorem}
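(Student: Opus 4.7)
The plan is to exploit the local spectral decomposition defining the offline space, together with the partition-of-unity argument standard in GMsFEM analysis. Using (\ref{eq:snap1}) and (\ref{eq:off1}), the error can be written as a tail of the eigenexpansion,
\[
u_{snap,i}(t,\cdot) - w_i(t,\cdot) = \sum_j \chi_j\, r_i^{(j)}(t,\cdot), \qquad r_i^{(j)}(t,\cdot) = \sum_{k > L_j} c_{k,i}^{(j)}(t)\, \phi_{k,i}^{(j)},
\]
with the analogous identity for $\partial_t(u_{snap,i} - w_i)$ because the basis functions $\phi_{k,i}^{(j)}$ are time-independent. The un-coupled spectral problem provides $a_i^{(j)}$-orthogonal, $s_i^{(j)}$-orthonormal eigenmodes with ascending eigenvalues $\lambda_{k,i}^{(j)}$, so a direct Parseval computation in each coarse region yields
\[
s_i^{(j)}(r_i^{(j)}, r_i^{(j)}) \leq \frac{1}{\lambda_{L_j+1,i}^{(j)}}\, a_i^{(j)}(r_i^{(j)}, r_i^{(j)}) \leq \frac{1}{\Lambda_1}\, a_i^{(j)}(r_i^{(j)}, r_i^{(j)}),
\]
and the identical inequality with $\partial_t r_i^{(j)}$ in place of $r_i^{(j)}$.

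Next I would globalize via the partition of unity and then eliminate the tail in favor of $u$ itself. Expanding $\nabla(\chi_j r_i^{(j)})$ by the product rule and invoking the bounded-overlap property of the neighborhoods $\{\omega_j\}$, the terms in which the derivative falls on $\chi_j$ are weighted by $|\nabla\chi_j|^2$ and hence are controlled by $s_i^{(j)}$ (precisely the role for which it was designed), while the remaining terms are controlled by $a_i^{(j)}$. Combined with the spectral tail bound this yields
\[
\|u_{snap} - w\|_a^2 \leq \frac{C}{\Lambda_1} \sum_{j,i} a_i^{(j)}(r_i^{(j)}, r_i^{(j)}),
\]
together with analogous bounds for $\|\partial_t(u_{snap}-w)\|_c^2$ and $\|(u_{snap}-w)(0,\cdot)\|_c^2$, where for the $c$-norm pieces I would first control $\|\chi_j r_i^{(j)}\|_c$ by $s_i^{(j)}(r_i^{(j)}, r_i^{(j)})^{1/2}$ via a standard local scaling argument and then apply the spectral inequality. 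To remove the tail, I would use that $u_{snap,i}^{(j)}$ is the $a_i^{(j)}$-harmonic extension of $u_i|_{\partial\omega_j}$ by the snapshot construction, and is therefore energy-minimizing among functions with those Dirichlet data; hence
\[
a_i^{(j)}(r_i^{(j)}, r_i^{(j)}) \leq a_i^{(j)}(u_{snap,i}^{(j)}, u_{snap,i}^{(j)}) \leq a_i^{(j)}(u_i, u_i).
\]
Since $\partial_t$ commutes with the harmonic extension, the same chain of inequalities applies to $\partial_t r_i^{(j)}$. Summing over $j$ (bounded overlap) and $i$, and integrating in time, produces the right-hand side claimed.

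The hard part will be the partition-of-unity step: the factor $1/\Lambda_1$ must appear uniformly on the $a$-norm of $w - u_{snap}$, on the $c$-norm of its time derivative, and on its $c$-norm at $t=0$. Achieving this requires matching the single local spectral inequality above to each of the three pieces on the left, carefully absorbing the coarse-mesh constants into $C$, and reconciling the different weights appearing in $\|\cdot\|_c$ versus $s_i^{(j)}(\cdot,\cdot)$; this is the customary technical core of GMsFEM convergence proofs and will implicitly use that the porosity weights entering $c$ are comparable, up to coarse-mesh constants, to the diffusion weights entering $s_i^{(j)}$ on each coarse block.
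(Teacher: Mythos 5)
Your overall architecture matches the paper's: expand the error as the eigen-tail $r_i^{(j)}=\sum_{k>L_j}c_{k,i}^{(j)}\phi_{k,i}^{(j)}$ in each $\omega_j$, use the spectral inequality $s_i^{(j)}(r_i^{(j)},r_i^{(j)})\le \big(\lambda_{L_j+1,i}^{(j)}\big)^{-1}a_i^{(j)}(r_i^{(j)},r_i^{(j)})$, pass from the tail to $u_{snap}$ by orthogonality of the eigenfunctions and from $u_{snap}$ to $u$ by harmonic energy minimization, and assemble with the bounded-overlap constant. The two $c$-norm pieces (the time-derivative term and the $t=0$ term, i.e.\ Lemmas \ref{lemma1} and \ref{lemma3}) are handled essentially as you describe; the paper makes your ``standard local scaling argument'' explicit through the constant $E=\max_{i,j,l}\big\{c_i\chi_j^2/(\kappa_i|\nabla\chi_j|^2),\, c_{l,i}\chi_j^2/(\kappa_{l,i}|\nabla_f\chi_j|^2)\big\}$, which is absorbed into $C$.

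The gap is in the $a$-norm term (Lemma \ref{lemma2}), and it is precisely the point you flag as ``the hard part'' without resolving it. After the product rule, the term $\int_{\omega_j}\kappa_i\chi_j^2|\nabla r_i^{(j)}|^2\,dx$ is, as you say, controlled by $a_i^{(j)}(r_i^{(j)},r_i^{(j)})$ since $\chi_j\le 1$ --- but that bound carries no factor of $1/\Lambda_1$, so your claimed inequality $\|u_{snap}-w\|_a^2\le (C/\Lambda_1)\sum_{j,i}a_i^{(j)}(r_i^{(j)},r_i^{(j)})$ does not follow from the two controls you list; what follows is only $C(1+1/\Lambda_1)\sum_{j,i}a_i^{(j)}(r_i^{(j)},r_i^{(j)})$, which after harmonic minimization gives $\|w-u_{snap}\|_a^2\le C\|u\|_a^2$ with no decay in $\Lambda_1$, i.e.\ no convergence. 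The paper's missing ingredient is a Caccioppoli-type inequality (Lemma \ref{lemma0}): because $r_i^{(j)}$ is a linear combination of snapshot functions, it is $a_i^{(j)}$-harmonic in $\omega_j$, and testing the local equation against $\chi_j^2 r_i^{(j)}$ yields $\int_{\omega_j}\kappa_i\chi_j^2|\nabla r_i^{(j)}|^2\,dx\le C\int_{\omega_j}\kappa_i|\nabla\chi_j|^2|r_i^{(j)}|^2\,dx= C\,s_i^{(j)}(r_i^{(j)},r_i^{(j)})$ (plus the fracture analogue). This routes the ``gradient falls on the tail'' term through the $s$-norm as well, so that every contribution to the $a$-norm error picks up the factor $1/\lambda_{L_j+1,i}^{(j)}$ from the spectral problem. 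Without this observation, or an equivalent device exploiting the harmonicity of the snapshot tail, the stated rate in $\Lambda_1$ cannot be obtained.
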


\begin{theorem}
\label{thm2}
For the coupled GMsFEM, let $u$ and $u_{snap}$ be the reference solution and snapshot projection in (\ref{eq:ref}) and (\ref{eq:ms})
and let $w\in V_{H}$ be the projection of $u_{snap}$ defined in (\ref{eq:off2}).
Then
we have
\begin{equation*}
\begin{split}
&\: \int_{0}^{T}\Big\|\cfrac{\partial(w-u_{snap})}{\partial t}\Big\|_{c}^{2} \, {dt}
+\int_{0}^{T}\|w-u_{snap}\|_{a_{Q}}^{2} \, {dt}
+\|w(0,\cdot)-u_{snap}(0,\cdot)\|_{c}^{2} \\
\leq
&\: \cfrac{C^2}{\Lambda_2}\left(\int_{0}^{T}\Big\|\cfrac{\partial u}{\partial t}\Big\|_{a_{Q}}^{2} \, {dt}
+\int_{0}^{T}\|u\|_{a_{Q}}^{2} \, {dt}
+\|u(0,\cdot)\|_{a_{Q}}^{2}\right),
\end{split}
\end{equation*}
where $\Lambda_2=\min_{j}\{\lambda_{L_j+1}^{(j)}\}$.
\end{theorem}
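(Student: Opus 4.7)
The plan is to exploit the coupled local spectral problem $a_Q^{(j)}(\phi_k^{(j)},v)=\lambda_k^{(j)}s^{(j)}(\phi_k^{(j)},v)$ in the standard GMsFEM spectral-gap manner. Combining the representations (\ref{eq:snap2}) and (\ref{eq:off2}), the error splits into a sum of localized ``tail'' contributions,
\begin{equation*}
w-u_{snap} \;=\; -\sum_{j}\chi_{j}\sum_{k>L_{j}} c_{k}^{(j)}(t)\,\phi_{k}^{(j)}(x),
\end{equation*}
so each of the three quantities on the left-hand side decomposes over coarse neighborhoods $\omega_{j}$ and over the excluded part of the spectrum. Because the basis functions are time-independent, the same decomposition holds for $\partial_{t}(w-u_{snap})$ with coefficients replaced by $\partial_{t}c_{k}^{(j)}(t)$, and likewise at $t=0$. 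This reduction to tail modes is what makes $\Lambda_{2}=\min_{j}\lambda_{L_{j}+1}^{(j)}$ naturally appear.

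The central local estimate I would establish as one of the intermediate lemmas is that, for every $v=\sum_{k>L_{j}}\alpha_{k}\phi_{k}^{(j)}$ in the snapshot space of $\omega_{j}$,
\begin{equation*}
\|\chi_{j}v\|_{a_{Q},\omega_{j}}^{2} \;\leq\; C\bigl(\|v\|_{a_Q^{(j)}}^{2} + s^{(j)}(v,v)\bigr).
\end{equation*}
The differential part of $a_Q^{(j)}$ produces, after the product rule, the familiar $|\nabla\chi_{j}|^{2}$ cross-terms that are absorbed by $s^{(j)}$. The non-differential transfer part $-q$ is simpler: $-q(\chi_{j}v,\chi_{j}v)$ integrates $\chi_{j}^{2}$ pointwise against the (nonnegative) $q$-integrand, so it is controlled directly by $-q(v,v)\leq\|v\|_{a_{Q}^{(j)}}^{2}$ without producing any commutator. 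Combined with the $s^{(j)}$-orthogonality of eigenfunctions and the lower bound $\lambda_{k}^{(j)}\geq\Lambda_{2}$ for $k>L_{j}$, this converts $s^{(j)}(v,v)$ into $\Lambda_{2}^{-1}\|v\|_{a_{Q}^{(j)}}^{2}$, giving $\|\chi_{j}v\|_{a_{Q},\omega_{j}}^{2}\leq C\Lambda_{2}^{-1}\|v\|_{a_{Q}^{(j)}}^{2}$ (for $\Lambda_{2}$ small; otherwise the bound is trivial). Summing over $j$ using the finite-overlap property of $\{\omega_{j}\}$ yields $\|w-u_{snap}\|_{a_{Q}}^{2}\leq C\Lambda_{2}^{-1}\|u_{snap}\|_{a_{Q}}^{2}$; an entirely analogous argument in the $c$-inner product controls $\|\partial_{t}(w-u_{snap})\|_{c}^{2}$ and the initial-time term $\|w(0,\cdot)-u_{snap}(0,\cdot)\|_{c}^{2}$.

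The last step is passing from $u_{snap}$ to $u$ on the right-hand side. The snapshot projection defined in (\ref{eq:ms1}) is the glued-together solution of the local coupled problem used to build the snapshots, so a standard Céa/energy-minimality argument against that local problem gives $\|u_{snap}\|_{a_{Q}}\leq C\|u\|_{a_{Q}}$. Since the snapshot space is time-independent, differentiating in $t$ gives $\|\partial_{t}u_{snap}\|_{a_{Q}}\leq C\|\partial_{t}u\|_{a_{Q}}$, and the same estimate at $t=0$ handles the initial-data term. Integrating in $t$ and combining with the spectral bound produces the stated $C^{2}/\Lambda_{2}$ factor, with one $C$ coming from the local spectral/cutoff estimate and the other from the snapshot-stability estimate.

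The hardest step, as in every GMsFEM analysis, is the local spectral/cutoff inequality for $\|\chi_{j}v\|_{a_{Q},\omega_{j}}$. In the coupled vector-valued setting one must verify that the choice of $s^{(j)}$ with the $|\nabla\chi_{j}|^{2}$ weight is still the correct spectral weight when the differential operator $a^{(j)}$ is perturbed by the non-local, non-symmetric-looking mass coupling $-q$, and that the constant $C$ can be made independent of the number of continua $N$ and of the contrast in the coefficients $\kappa_{i},\kappa_{i,j}$. Because $q$ carries no derivatives, the commutator with $\chi_{j}$ is trivial, which is precisely why the same weight $|\nabla\chi_{j}|^{2}$ that works in the un-coupled case of Theorem \ref{thm1} also works here; making this cancellation quantitative with explicit constants is the technical core of the three intermediate lemmas that feed into the theorem.
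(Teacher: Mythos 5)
Your overall architecture matches the paper's: the tail decomposition $w-u_{snap}=-\sum_j\chi_j\sum_{k>L_j}c_k^{(j)}(t)\phi_k^{(j)}$, the separate treatment of the three left-hand terms (the paper's Lemmas \ref{lemma1}, \ref{lemma2} and \ref{lemma3}), the conversion of $s^{(j)}$ of the tail into $\Lambda_2^{-1}a_Q^{(j)}$ via eigenfunction orthogonality, and the energy-minimality of the local $a_Q$-harmonic expansion to pass from $u_{snap}$ to $u$. For the two $\|\cdot\|_c$ terms your argument is sound: there the cutoff enters only as a multiplicative weight, and the comparison constant $E=\max\bigl\{c_i\chi_j^2/(\kappa_i|\nabla\chi_j|^2),\,c_{l,i}\chi_j^2/(\kappa_{l,i}|\nabla_f\chi_j|^2)\bigr\}$ reduces everything to $s^{(j)}$ of the tail, exactly as in Lemma \ref{lemma1}.

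The gap is in the energy term. Your central local estimate $\|\chi_j v\|_{a_Q,\omega_j}^2\le C\bigl(\|v\|_{a_Q^{(j)}}^2+s^{(j)}(v,v)\bigr)$ is true but too weak. Writing $v=\sum_{k>L_j}\alpha_k\phi_k^{(j)}$ with $s^{(j)}$-orthonormal eigenfunctions, one has $s^{(j)}(v,v)=\sum_{k>L_j}\alpha_k^2\le\Lambda_2^{-1}a_Q^{(j)}(v,v)$, but the leftover term $\|v\|_{a_Q^{(j)}}^2=\sum_{k>L_j}\lambda_k^{(j)}\alpha_k^2$ admits no $\Lambda_2^{-1}$ gain at all; it is only bounded by $a_Q^{(j)}(u_{snap},u_{snap})$. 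Your parenthetical ``otherwise the bound is trivial'' is false: for $\Lambda_2$ large the claimed inequality $\|\chi_jv\|_{a_Q,\omega_j}^2\le C\Lambda_2^{-1}\|v\|_{a_Q^{(j)}}^2$ is the strong statement, not the trivial one, so your route only yields a factor $C(1+\Lambda_2^{-1})$, which does not tend to zero as basis functions are added and therefore does not prove the stated $C^2/\Lambda_2$ rate. The missing idea is the paper's Lemma \ref{lemma0}, a Caccioppoli-type inequality: because the tail is a combination of snapshot functions it is locally $a_Q$-harmonic in $\omega_j$, and testing the local equation with $\chi_j^2v$ shows that $\sum_i\int_{\omega_j}\kappa_i\chi_j^2|\nabla v_i|^2\,dx+\sum_l\int_{D_{f,l}\cap\omega_j}\kappa_{l,i}\chi_j^2|\nabla_f v_i|^2\,dx-q(\chi_jv,\chi_jv)$ is bounded by $C\,s^{(j)}(v,v)$ alone, with no residual full-energy term. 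That harmonicity-based cancellation is precisely what lets the entire energy estimate in Lemma \ref{lemma2} carry the factor $\Lambda_2^{-1}$, and it is the step your sketch replaces with a generic product rule that cannot deliver it.
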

We will proof the above two theorems by estimating
$\int_{0}^{T} \Big\|\cfrac{\partial(w-u_{snap})}{\partial t} \Big\|_{c}^{2} \, {dt}$,
$\int_{0}^{T}\|w-u_{snap}\|_{a}^{2}$, $\int_{0}^{T}\|w-u_{snap}\|_{a_{Q}}^{2} \, {dt}${,} and $\|w(0,\cdot)-u_{snap}(0,\cdot)\|_{c}^{2}$ separately in the following lemmas.
Unless otherwise specified, the constant $C$ is independent of any scales and continuum.

\begin{lemma}
\label{lemma1}
Let $u$, $u_{snap}${,} and $w$ be defined as in Theorems \ref{thm1} and \ref{thm2}. For the un-coupled basis functions,
we have
\[
\int_{0}^{T}\Big\|\cfrac{\partial(w-u_{snap})}{\partial t}\Big\|_{c}^{2}  \, {dt}
\leq\cfrac{CE}{\Lambda_1}\, \Big\|\cfrac{\partial u}{\partial t}\Big\|_{a}^{2}  \, {dt}.
\]
For the coupled basis functions, we have
\[
\int_{0}^{T}\Big\|\cfrac{\partial(w-u_{snap})}{\partial t}\Big\|_{c}^{2}
\leq\cfrac{CE}{\Lambda_2}\, \Big\|\cfrac{\partial u}{\partial t}\Big\|_{a_{Q}}^{2},
\]
where $E=\max_{i,j,l}\Big\{\cfrac{c_{i}\chi_{j}^{2}}{\kappa_{i}|\nabla\chi_{j}|^{2}},\, \cfrac{c_{l,i}\chi^{2}_{j}}{\kappa_{l,i}|\nabla_{f}\chi_{j}|^2}\Big\}$.
\end{lemma}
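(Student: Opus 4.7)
The plan is to exploit the spectral decomposition that defines the offline space and to combine it with two standard ingredients: the partition-of-unity bound that converts the $c$-norm of a localized quantity into the weighted $s^{(j)}$ bilinear form, and the harmonic-extension property of the snapshot projection. I treat the un-coupled case first; the coupled case is structurally identical after replacing $a_i^{(j)}$ by $a_Q^{(j)}$ and $s_i^{(j)}$ by $s^{(j)}$.

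First, I would write $\eta_i^{(j)}(t,x) = \sum_{k>L_j} (\partial_t c_{k,i}^{(j)})(t)\,\phi_{k,i}^{(j)}(x)$, so that by (\ref{eq:snap1})--(\ref{eq:off1})
\[
\partial_t(u_{snap,i}-w_i) = \sum_j \chi_j(x)\, \eta_i^{(j)}(t,x),
\]
since only the coefficients depend on $t$. Using the bounded overlap of the coarse neighborhoods $\{\omega_j\}$ and the definition of $c(\cdot,\cdot)$, I bound
\[
\Big\|\partial_t(w-u_{snap})\Big\|_{c}^{2}
\;\le\; C\sum_{i,j}\Big( \int_{\omega_j} c_i\,\chi_j^2 |\eta_i^{(j)}|^2\,dx
+ \sum_l \int_{D_{f,l}\cap\omega_j} c_{l,i}\,\chi_j^2 |\eta_i^{(j)}|^2\,dx \Big).
\]
By the definition of $E$, each term on the right is bounded by $E$ times the corresponding integral with weight $\kappa_i|\nabla\chi_j|^2$ or $\kappa_{l,i}|\nabla_f\chi_j|^2$, which is exactly $E\, s_i^{(j)}(\eta_i^{(j)},\eta_i^{(j)})$.

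Second, I normalize the eigenfunctions so that $s_i^{(j)}(\phi_{k,i}^{(j)},\phi_{l,i}^{(j)}) = \delta_{kl}$ and hence $a_i^{(j)}(\phi_{k,i}^{(j)},\phi_{l,i}^{(j)}) = \lambda_{k,i}^{(j)}\delta_{kl}$. The tail sum then satisfies
\[
s_i^{(j)}(\eta_i^{(j)},\eta_i^{(j)}) = \sum_{k>L_j}(\partial_t c_{k,i}^{(j)})^2
\;\le\; \frac{1}{\lambda_{L_j+1,i}^{(j)}}\sum_{k>L_j}\lambda_{k,i}^{(j)}(\partial_t c_{k,i}^{(j)})^2
\;\le\; \frac{1}{\Lambda_1}\, a_i^{(j)}(\eta_i^{(j)},\eta_i^{(j)}),
\]
and by $a_i^{(j)}$-orthogonality of the eigenbasis, $a_i^{(j)}(\eta_i^{(j)},\eta_i^{(j)})\le a_i^{(j)}(\partial_t u_{snap,i}^{(j)}, \partial_t u_{snap,i}^{(j)})$.

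Third, I invoke the defining property of the snapshot extension: $u_{snap,i}^{(j)}$ is the $a_i^{(j)}$-harmonic extension of $u_i|_{\partial\omega_j}$, hence $\partial_t u_{snap,i}^{(j)}$ is the $a_i^{(j)}$-harmonic extension of $\partial_t u_i|_{\partial\omega_j}$, and therefore
\[
a_i^{(j)}(\partial_t u_{snap,i}^{(j)}, \partial_t u_{snap,i}^{(j)}) \;\le\; a_i^{(j)}(\partial_t u_i, \partial_t u_i).
\]
Summing over $i$ and $j$ (using bounded overlap to recover the global form) and integrating in time produces
\[
\int_0^T \Big\|\partial_t(w-u_{snap})\Big\|_c^2 dt \;\le\; \frac{CE}{\Lambda_1}\int_0^T \Big\|\partial_t u\Big\|_a^2\, dt.
\]
For the coupled basis, exactly the same three steps apply with $a_i^{(j)}$ replaced by $a_Q^{(j)}$, $s_i^{(j)}$ by $s^{(j)}$, and $\Lambda_1$ by $\Lambda_2$; the harmonic extension property in step three uses $a_Q^{(j)}$ (the snapshots are defined by $a^{(j)}(\psi,v)-q(\psi,v)=0$).

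I expect the main technical nuisance to be the bookkeeping of step three: the snapshot problem is solved locally on $\omega_j$, but $u_i$ itself is a global function, so I must carefully identify the restriction of the global solution as a valid competitor against which the harmonic extension minimizes energy. The partition-of-unity/finite-overlap bound in step one is routine but requires the explicit assumption that each point lies in $O(1)$ coarse neighborhoods, which I would state at the start.
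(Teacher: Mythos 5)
Your proposal is correct and follows essentially the same route as the paper's proof: partition-of-unity plus finite-overlap to reduce the $c$-norm to $E$ times the local $s_i^{(j)}$ (resp.\ $s^{(j)}$) norm of the spectral tail, the eigenvalue bound $s_i^{(j)}(\eta,\eta)\le \lambda_{L_j+1,i}^{(j)\,-1}a_i^{(j)}(\partial_t u^{(j)}_{snap,i},\partial_t u^{(j)}_{snap,i})$ via orthogonality, and finally the energy-minimizing property of the harmonic (snapshot) extension against the restriction of the global solution. The only cosmetic difference is that you normalize the eigenfunctions in $s_i^{(j)}$ while the paper carries the factors $a_i^{(j)}(\phi_{k,i}^{(j)},\phi_{k,i}^{(j)})/\lambda_{k,i}^{(j)}$ explicitly; these are equivalent.
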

\begin{proof}
We will present the proof for the case of un-coupled basis functions. First, {note} that
\begin{eqnarray*}
\|(u_{snap})_{t}-w_{t}\|_{c}^{2} &
\leq
& \sum_{i}\Big\|\sum_{j}\left(\chi_{j}\cfrac{\partial u^{(j)}_{snap,i}}{\partial t}
-\sum_{k\leq L_j}\cfrac{\partial c_{k,i}^{(j)}}{\partial t}\chi_{j} \phi_{k,i}^{(j)} \right)\Big\|_{c}^{2}\\
 &
 \leq
 & D\sum_{i}\sum_{j}\int_{\omega_{j}}\cfrac{c_i\chi_{j}^{2}}{\kappa_{i}|\nabla\chi_{j}|^{2}}\kappa_{i}|\nabla\chi_{j}|^{2}\left(\cfrac{\partial u^{(j)}_{snap,i}}{\partial t}-\sum_{k\leq L_j}\cfrac{\partial c_{k,i}^{(j)}}{\partial t} \, \phi_{k,i}^{(j)}\right)^{2}  \, {dx}\\
 & &
 +\sum_{i,j,l}\int_{D_{f,l}\cap\omega_j} c_{l,i}\cfrac{\chi^{2}_{j}}{\kappa_{l,i}|\nabla_{f}\chi_{j}|^2}\kappa_{l,i}|\nabla_{f}\chi_{j}|^2 \left(\cfrac{\partial u^{(j)}_{snap,i}}{\partial t}-\sum_{k\leq L_j}\cfrac{\partial c_{k,i}^{(j)}}{\partial t} \, \phi_{k,i}^{(j)}\right)^{2}  \, {dx} \\
 &
 \leq & DE\sum_i
 \sum_{j}s_i^{(j)} \left(\sum_{k>L_j}\cfrac{\partial c_{k,i}^{(j)}(t)}{\partial t}\phi_{k,i}^{(j)},\sum_{k>L_j}\cfrac{\partial c_{k,i}^{(j)}(t)}{\partial t}\phi_{k,i}^{(j)} \right)
\end{eqnarray*}
with $D=\max_{K\in\mathcal{T}^H} \{D_K\}$ where $D_K$ is the number of coarse neighborhoods intersecting with $K$.
By using the orthogonality of eigenfunctions, we have
\begin{align*}
s_i^{(j)} \left(\sum_{k>L_j}\cfrac{\partial c_{k,i}^{(j)}(t)}{\partial t}\phi_{k,i}^{(j)},\sum_{k>L_j}\cfrac{\partial c_{k,i}^{(j)}(t)}{\partial t}\phi_{k,i}^{(j)} \right)
&
\leq
\sum_{k>L_j} \frac{1}{\lambda_{k,i}^{(j)}} \left(\cfrac{\partial c_{k,i}^{(j)}(t)}{\partial t}\right)^{2} a_i^{(j)}(\phi_{k,i}^{(j)},\phi_{k,i}^{(j)})\\
 &
 \leq
 \cfrac{1}{\lambda_{L_j+1,i}^{(j)}}\sum_{k}\left(\cfrac{\partial c_{k,i}^{(j)}(t)}{\partial t}\right)^{2} a_i^{(j)}(\phi_{k,i}^{(j)},\phi_{k,i}^{(j)})\\
 &
 =\cfrac{1}{\lambda_{L_j+1,i}^{(j)}} a_i^{(j)} \left(\cfrac{\partial u^{(j)}_{snap,i}}{\partial t},\cfrac{\partial u^{(j)}_{snap,i}}{\partial t} \right).
\end{align*}
Since $u^{(j)}_{snap,i}$ is the $a^{(j)}_i$-harmonic expansion of $u_i$ in $\omega_j$, we have
\[
a^{(j)}_i(u^{(j)}_{snap},u^{(j)}_{snap}) \leq a^{(j)}_i(u_i,u_i)
\]
and similarly
\[
a^{(j)}_i \left(\cfrac{\partial u^{(j)}_{snap,i}}{\partial t},\cfrac{\partial u^{(j)}_{snap,i}}{\partial t} \right)
\leq
a^{(j)}_i \left(\cfrac{\partial u_{i}}{\partial t},\cfrac{\partial u_{i}}{\partial t} \right).
\]
Therefore, by summing over all $i,j$, we obtain
\begin{align*}
\sum_{i,j}s_i^{(j)} \left(\sum_{k>L_j}\cfrac{\partial c_{k,i}^{(j)}(t)}{\partial t}\phi_{k,i}^{(j)},\sum_{k>L_j}\cfrac{\partial c_{k,i}^{(j)}(t)}{\partial t}\phi_{k,i}^{(j)} \right)
&
\leq
\sum_{i,j}\cfrac{1}{\lambda_{L_j+1,i}^{(j)}} a_i^{(j)} \left(\cfrac{\partial u_i}{\partial t},\cfrac{\partial u_i}{\partial t} \right)\\
&
\leq
\cfrac{1}{\min_{i,j}\{\lambda_{L_j+1,i}^{(j)}\}}\sum_{i,j} a_i^{(j)} \left(\cfrac{\partial u_i}{\partial t},\cfrac{\partial u_i}{\partial t} \right)\\
&
\leq
\cfrac{D}{\min_{i,j}\{\lambda_{L_j+1,i}^{(j)}\}}a \left(\cfrac{\partial u}{\partial t},\cfrac{\partial u}{\partial t} \right).
\end{align*}

For the case of coupled basis functions, we have $s^{(j)}(\cdot,\cdot)=\sum_i s^{(j)}_i(\cdot,\cdot)$.
By using the same arguments, we have

\[
\|(u_{snap})_{t}-w_{t}\|_{c}^{2} \leq
DE \sum_{j}s^{(j)} \left(\sum_{k>L_j}\cfrac{\partial c_{k}^{(j)}(t)}{\partial t}\phi_{k}^{(j)},\sum_{k>L_j}\cfrac{\partial c_{k}^{(j)}(t)}{\partial t}\phi_{k}^{(j)} \right)
\]
and
\[
s^{(j)} \left(\sum_{k>L_j}\cfrac{\partial c_{k}^{(j)}(t)}{\partial t}\phi_{k}^{(j)},\sum_{k>L_j}\cfrac{\partial c_{k}^{(j)}(t)}{\partial t}\phi_{k}^{(j)} \right)
 \leq
 \cfrac{1}{\lambda_{L_j+1}^{(j)}} a^{(j)}_{Q} \left(\cfrac{\partial u^{(j)}_{snap}}{\partial t},\cfrac{\partial u^{(j)}_{snap}}{\partial t} \right).
\]
 Since $u^{(j)}_{snap}$ is the $a^{(j)}_{Q}$-harmonic expansion of $u_i$ in $\omega_j$, we have
\[
a^{(j)}_{Q}(u^{(j)}_{snap},u^{(j)}_{snap}) \leq a^{(j)}_{Q}(u,u)
\]
and
\[
a^{(j)}_{Q} \left(\cfrac{\partial u^{(j)}_{snap}}{\partial t},\cfrac{\partial u^{(j)}_{snap}}{\partial t} \right)
\leq
a^{(j)}_{Q} \left(\cfrac{\partial u}{\partial t},\cfrac{\partial u}{\partial t} \right).
\]
Therefore the proof is complete.
\end{proof}

Before we estimate the terms $\int_{0}^{T}\|w-u_{snap}\|_{a}^{2} \, {dt}$ and $\int_{0}^{T}\|w-u_{snap}\|_{a_{Q}}^{2} \, {dt}$,
we first prove the following lemma.

\begin{lemma}
\label{lemma0}
For the case of coupled basis functions, if $u$ satisfies
\begin{equation*}
\sum_{i}\int_{\omega_{j}}\kappa_{i}\nabla u_{i}\cdot\nabla v_{i} \, {dx}
+\sum_l \int_{D_{f,l}\cap \omega_j}\kappa_{l,i}\nabla_{f}u_{i}\cdot\nabla_{f}v_{i} \, {dx}
-q(u,v)
=\int_{\omega_{j}}f v \, {dx}, \quad\forall v\in[H_{0}^{1}(\omega_{j})]^{N},
\end{equation*}
then we have
\begin{align*}
 &\: \sum_{i}\int_{\omega_{j}}\kappa_{i}\chi_{j}^{2}|\nabla u_{i}|^{2} \, {dx}
 +\sum_l \int_{D_{f,l}\cap \omega_j}\kappa_{l,i}\chi_{j}^{2}|\nabla_{f_{j}}u|^{2}-q(\chi_{j}u,\chi_{j}v)  \, {dx} \\
\leq
&\: C\sum_{i}\Big(\int_{\omega}\cfrac{\chi_{j}^{4}}{\kappa_{i}|\nabla\chi_{j}|^{2}}f_{i}^{2}  \, {dx}
+\int_{\omega_{j}}\kappa_{i}|\nabla\chi_{j}|^{2}u^{2} \, {dx}
]+\sum_l \int_{D_{f,l}\cap \omega_j}\kappa_{l,i}|\nabla_{f}\chi_{j}|^{2}u^{2}  \, {dx} \Big).
\end{align*}
For the case of
un-coupled basis functions, if $u$ satisfies
\begin{equation*}
\int_{\omega_{j}}\kappa_{i}\nabla u\cdot\nabla v  \, {dx}
+\sum_l \int_{D_{f,l}\cap \omega_j}\kappa_{l,i}\nabla_{f}u\cdot\nabla_{f}v  \, {dx}
=\int_{\omega_{j}}f v \, {dx}, \quad\forall v\in H_{0}^{1}(\omega_{j}) ,
\end{equation*}
then we have
\begin{align*}
 & \: \sum_{i}\int_{\omega_{j}}\kappa_{i}\chi_{j}^{2}|\nabla u_{i}|^{2}  \, {dx}
 +\sum_l \int_{D_{f,l}\cap \omega_j}\kappa_{l,i}\chi_{j}^{2}|\nabla_{f}u|^{2}  \, {dx} \\
\leq
&\:  C\sum_{i}\Big(\int\cfrac{\chi_{j}^{4}}{\kappa_{i}|\nabla\chi_{j}|^{2}}f_{i}^{2}  \, {dx}
+\int_{\omega_{j}}\kappa_{i}|\nabla\chi_{j}|^{2}u^{2}  \, {dx}
+\sum_l \int_{D_{f,l}\cap \omega_j}\kappa_{l,i}|\nabla_{f}\chi_{j}|^{2}u^{2}  \, {dx}\Big).
\end{align*}
\end{lemma}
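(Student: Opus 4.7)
The plan is to carry out a Caccioppoli-type local energy estimate by testing the assumed weak form against the cutoff-weighted function $v_i=\chi_j^2 u_i$. Since $\chi_j$ is a partition-of-unity function vanishing on $\partial\omega_j$, the choice $v_i=\chi_j^2 u_i$ lies in $H_0^1(\omega_j)$, so this is an admissible test function in both the coupled and un-coupled settings.

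First I would expand the gradient via the product rule,
\begin{equation*}
\nabla(\chi_j^2 u_i)=2\chi_j u_i\nabla\chi_j+\chi_j^2\nabla u_i,
\end{equation*}
so that $\kappa_i\nabla u_i\cdot\nabla(\chi_j^2 u_i)=\kappa_i\chi_j^2|\nabla u_i|^2+2\kappa_i\chi_j u_i\nabla\chi_j\cdot\nabla u_i$. Young's inequality controls the cross term,
\begin{equation*}
|2\kappa_i\chi_j u_i\nabla\chi_j\cdot\nabla u_i|\le \tfrac12\kappa_i\chi_j^2|\nabla u_i|^2+2\kappa_i|\nabla\chi_j|^2 u_i^2,
\end{equation*}
and an identical expansion and bound handles the one-dimensional fracture contribution with $\nabla_f$ in place of $\nabla$ and $\kappa_{l,i}$ in place of $\kappa_i$. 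Rearranging gives that $\tfrac12\sum_i\int_{\omega_j}\kappa_i\chi_j^2|\nabla u_i|^2\,dx$ plus the analogous fracture term is bounded above by $\sum_i\bigl(\int_{\omega_j}\kappa_i\nabla u_i\cdot\nabla(\chi_j^2 u_i)\,dx+\sum_l\int_{D_{f,l}\cap\omega_j}\kappa_{l,i}\nabla_f u_i\cdot\nabla_f(\chi_j^2 u_i)\,dx\bigr)$ plus the Caccioppoli remainders $2\sum_i\int\kappa_i|\nabla\chi_j|^2 u_i^2\,dx$ and $2\sum_{i,l}\int\kappa_{l,i}|\nabla_f\chi_j|^2 u_i^2\,dx$.

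Next I would feed $v=\chi_j^2 u$ into the assumed weak equation to replace the bilinear-form contribution on the right by the source $\int_{\omega_j} f\cdot\chi_j^2 u\,dx$ (plus $q(u,\chi_j^2 u)$ in the coupled case). A key algebraic observation for the coupled case is that $\chi_j^2$ factors as $\chi_j\cdot\chi_j$ inside the definition $q(u,v)=\sum_k\sum_{l\ne k}Q_k\int(u_k-u_l)v_l\,dx$, which yields the identity $q(u,\chi_j^2 u)=q(\chi_j u,\chi_j u)$; this is exactly the term that appears on the left of the lemma statement (what is written as $-q(\chi_j u,\chi_j v)$ I read as the symmetric $-q(\chi_j u,\chi_j u)$). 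The source term is then absorbed by the weighted Young inequality
\begin{equation*}
|\chi_j^2 f_i u_i|=\Bigl|\tfrac{\chi_j^2 f_i}{\sqrt{\kappa_i|\nabla\chi_j|^2}}\cdot\sqrt{\kappa_i|\nabla\chi_j|^2}\,u_i\Bigr|\le \tfrac12\tfrac{\chi_j^4}{\kappa_i|\nabla\chi_j|^2}f_i^2+\tfrac12\kappa_i|\nabla\chi_j|^2 u_i^2,
\end{equation*}
with the second summand absorbed into the $|\nabla\chi_j|^2 u^2$ remainder on the right, producing the claimed bound after multiplying through by a universal constant. The un-coupled case is the same calculation restricted to a single continuum with no $q$-term.

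\paragraph*{Main obstacle.}
The subtle point is the consistent bookkeeping between 2D matrix contributions and 1D fracture contributions: the fracture gradient $\nabla_f(\chi_j^2 u_i)$ must be expanded by differentiating the tangential restriction of $\chi_j$ to each $D_{f,l}$, and the Young constants must be chosen so that \emph{all} cross terms can be simultaneously absorbed into $\chi_j^2|\nabla u_i|^2$ and $\chi_j^2|\nabla_f u_i|^2$ on the left. The second delicate point is the coupled-case identity $q(u,\chi_j^2 u)=q(\chi_j u,\chi_j u)$, which depends on the bilinear (rather than genuinely nonlinear) structure of $q$ and on $\chi_j\in L^\infty$; without this factorization the $q$-term would not reduce cleanly to the form stated in the lemma.
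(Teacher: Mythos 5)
Your proposal is correct and follows essentially the same route as the paper: the paper also tests the weak form with $v=\chi_j^2u$, expands via the product rule to isolate $\sum_i\int_{\omega_j}\kappa_i\chi_j^2|\nabla u_i|^2$ plus the fracture analogue and $-q(\chi_ju,\chi_ju)$ on the left, and absorbs the cross terms and the source by weighted Cauchy--Schwarz/Young inequalities into the $\kappa_i|\nabla\chi_j|^2u^2$ and $\chi_j^4f_i^2/(\kappa_i|\nabla\chi_j|^2)$ terms. Your explicit factorization $q(u,\chi_j^2u)=q(\chi_ju,\chi_ju)$ and the spelled-out Young constants are details the paper leaves implicit, but the argument is the same.
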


\begin{proof}
For the case of coupled basis {functions}, we take $v=\chi_{j}^{2}u$ and obtain
\begin{align*}
\sum_{i}\int_{\omega_{j}}\kappa_{i}\nabla u_{i}\cdot\nabla(\chi_{j}^{2}u_{i}) \, {dx}
+\sum_l \int_{D_{f,l}\cap \omega_j}\kappa_{l,i}\nabla_{f}u_{i}\cdot\nabla_{f}(\chi_{j}^{2}u_{i}) \, {dx}
-q(\chi_{j}u,\chi_{j}u) &
=\int_{\omega_{j}}\chi_{j}^{2}fu  \, {dx}.
\end{align*}
This implies
\begin{align*}
&\: \sum_{i}(\int_{\omega_{j}}\kappa_{i}\chi_{j}^{2}|\nabla u_{i}|^{2} \, {dx}
+\sum_l \int_{D_{l,i}\cap \omega_j}\kappa_{l,i}\chi_{j}^{2}|\nabla_{f}u_{i}|^{2}  \, {dx}
-q(\chi_{j}u,\chi_{j}u))\\
=
& \:\int_{\omega_{j}}\chi_{j}^{2}fu \, {dx}
-2\sum_{i}\left(\int_{\omega_{j}}\kappa_{i}\chi_{j}u_{i}\nabla u_{i}\cdot\nabla\chi_{j} \, {dx}
+\sum_l\int_{D_{f,l}\cap \omega_j}\kappa_{l,i}\chi_{j}u_{i}\nabla_{f} u_{i}\cdot\nabla_{f}\chi_{j}  \, {dx} \right)\\
\leq
&\: C\sum_{i}\left(\int\cfrac{\chi_{j}^{4}}{\kappa_{i}|\nabla\chi_{j}|^{2}}f_{i}^{2} \, {dx}
+\int_{\omega_{j}}\kappa_{i}|\nabla\chi_{j}|^{2}u_{k}^{2} \, {dx}
+\sum_l \int_{D_{f,l}\cap \omega_j}\kappa_{l,i}|\nabla\chi_{j}|^{2}u_{i}^{2} \, {dx}\right).
\end{align*}
This completes the proof for the case of coupled basis functions.
For the case of un-coupled basis functions, the proof is similar and is therefore omitted.
\end{proof}

\begin{lemma}
\label{lemma2}
Let $u$, $u_{snap}$ and $w$ be defined as in Theorem \ref{thm1} and \ref{thm2}. For the case of un-coupled basis functions,
we have
\[
\int_{0}^{T}\|w-u_{snap}\|_{a}^{2} \, {dt}
\leq
\cfrac{C}{\Lambda_{1}}\|u\|_{a}^{2}.
\]
For the case of coupled basis functions, we have
\[
\int_{0}^{T}\|w-u_{snap}\|_{a_{Q}}^{2} \, {dt}
\leq
\cfrac{C}{\Lambda_{2}}\|u\|_{a_{Q}}^{2},
\]
where $\Lambda_1$ and $\Lambda_2$ are defined in Theorem \ref{thm1} and \ref{thm2}.
\end{lemma}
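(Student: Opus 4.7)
The plan is to mimic the strategy used in Lemma \ref{lemma1}, but applied to the $a$-norm (respectively, the $a_Q$-norm) of the tail $w - u_{snap}$ rather than to the $c$-norm of its time derivative. Concretely, for the un-coupled case, I would first write, using (\ref{eq:snap1}) and (\ref{eq:off1}),
\[
u_{snap,i} - w_i \;=\; \sum_j \chi_j \, \eta^{(j)}_i, \qquad \eta^{(j)}_i \;=\; \sum_{k>L_j} c^{(j)}_{k,i}(t)\,\phi^{(j)}_{k,i},
\]
and observe that each $\eta^{(j)}_i$ is an $a^{(j)}_i$-harmonic function in $\omega_j$ because it lies in the snapshot space $V^{(i)}_{\text{snap}}(\omega_j)$. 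Using the bounded overlap constant $D$ of the coarse neighborhoods, I would localize
\[
\|w - u_{snap}\|^2_{a} \;\leq\; D\,\sum_{i,j}\, a^{(j)}_i\!\left(\chi_j \eta^{(j)}_i,\,\chi_j \eta^{(j)}_i\right).
\]

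Next I would expand $\nabla(\chi_j \eta)$ via the product rule to get the two contributions $|\nabla\chi_j|^2\eta^2$ and $\chi_j^2|\nabla \eta|^2$ (together with the analogous fracture terms). The first contribution is exactly $s^{(j)}_i(\eta^{(j)}_i,\eta^{(j)}_i)$. For the second, I would apply Lemma \ref{lemma0} to $\eta^{(j)}_i$ with $f=0$, which is permitted since $\eta^{(j)}_i$ is $a^{(j)}_i$-harmonic, giving
\[
\int_{\omega_j}\kappa_i \chi_j^2 |\nabla \eta^{(j)}_i|^2\,dx + \sum_{l}\int_{D_{f,l}\cap\omega_j}\kappa_{l,i}\chi_j^2|\nabla_f \eta^{(j)}_i|^2\,dx \;\leq\; C\, s^{(j)}_i(\eta^{(j)}_i,\eta^{(j)}_i).
\]
Therefore $a^{(j)}_i(\chi_j\eta^{(j)}_i,\chi_j\eta^{(j)}_i) \leq C\, s^{(j)}_i(\eta^{(j)}_i,\eta^{(j)}_i)$.

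Invoking the spectral problem exactly as in Lemma \ref{lemma1} (using $s$-orthonormality of the eigenfunctions and the gap $\lambda^{(j)}_{L_j+1,i} \geq \Lambda_1$), I would conclude
\[
s^{(j)}_i(\eta^{(j)}_i,\eta^{(j)}_i) \;\leq\; \frac{1}{\lambda^{(j)}_{L_j+1,i}}\, a^{(j)}_i\!\left(\eta^{(j)}_i,\eta^{(j)}_i\right) \;\leq\; \frac{1}{\Lambda_1}\, a^{(j)}_i\!\left(u^{(j)}_{snap,i},u^{(j)}_{snap,i}\right),
\]
where the last inequality uses the minimizing property of the $a^{(j)}_i$-harmonic expansion. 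Summing over $i,j$, the $a^{(j)}_i$-harmonicity of $u^{(j)}_{snap,i}$ against $u_i$ yields $\sum_{i,j} a^{(j)}_i(u^{(j)}_{snap,i},u^{(j)}_{snap,i}) \leq D\,\|u\|^2_a$, and integration in $t\in(0,T)$ gives the first estimate. The coupled case is essentially identical after replacing $a^{(j)}_i$, $s^{(j)}_i$ by $a^{(j)}_Q$, $s^{(j)}$, using the corresponding decomposition (\ref{eq:snap2})--(\ref{eq:off2}) and the coupled version of Lemma \ref{lemma0}; the $a^{(j)}_Q$-harmonicity of $u^{(j)}_{snap}$ then yields the bound with $\Lambda_2$.

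The main technical obstacle I expect is justifying the application of Lemma \ref{lemma0} to $\eta^{(j)}_i$. Although $\eta^{(j)}_i$ is a linear combination of snapshot functions, it does not vanish on $\partial\omega_j$, so Lemma \ref{lemma0} must be applied in the form ``harmonic with zero right-hand side''; one needs to keep the boundary data outside the $\chi_j$-weighted estimate (which is fine because $\chi_j$ vanishes on $\partial\omega_j$). The secondary bookkeeping challenge is handling the fracture contributions uniformly with the matrix contributions, which is straightforward but tedious since each term $\int_{D_{f,l}\cap\omega_j}\kappa_{l,i}|\nabla_f(\chi_j \eta)|^2$ must be split and recombined exactly as in the matrix case; the constant $C$ absorbed throughout is independent of scales and continua as stated.
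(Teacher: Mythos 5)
Your proposal is correct and follows essentially the same route as the paper: localize via the partition of unity and the overlap constant $D$, split $a^{(j)}_i(\chi_j\eta,\chi_j\eta)$ by the product rule into the $s^{(j)}_i$-term and the $\chi_j^2$-weighted gradient term, control the latter by Lemma \ref{lemma0} applied to the (harmonic, zero right-hand side) tail, then use the spectral gap and the energy-minimizing property of the local harmonic expansion of $u$. The subtleties you flag (nonzero boundary data of the tail, uniform treatment of the fracture terms) are handled implicitly in the paper in exactly the way you describe.
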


\begin{proof}
We first define $e^{(j)}_{i}$ by
\begin{equation*}
e^{(j)}_{i} =
\begin{cases}
\sum_{k>L_j}\cfrac{\partial c_{k,i}^{(j)}(t)}{\partial t}\phi_{k,i}^{(j)}, &\quad \text{for un-coupled basis {functions}}, \\
{} \\
\sum_{k>L_j}\cfrac{\partial c_{k}^{(j)}(t)}{\partial t}\phi_{k}^{(j)}, &\quad\text{for coupled basis {functions}}.
\end{cases}
\end{equation*}

{Note} that
\begin{align*}
&\quad \|w-u_{snap}\|_{a}^{2} \\
& =\sum_{i}\left(\int_{\omega_{j}}\kappa_{i}\nabla\sum_{j}\chi_{j}e_{i}^{(j)}\cdot\nabla\sum_{j}\chi_{j}e_{i}^{(j)} \, {dx}
+\sum_l \int_{D_{f,l}\cap \omega_j} \kappa_{l,i}\nabla_{f}\sum_{j}\chi_{j}e_{i}^{(j)}\cdot\nabla_{f}\sum_{j}\chi_{j}e_{i}^{(j)} \, {dx} \right)\\
 &
 \leq
 D\sum_{i}\sum_{j}\left(\int_{\omega_{j}}\kappa_{i}|\nabla\chi_{j}e_{i}^{(j)}|^{2} \, {dx}
 +\sum_l \int_{D_{f,l}\cap \omega_j} \kappa_{l,i} |\nabla_{f}\chi_{j}e_{i}^{(j)}|^{2} \, {dx} \right)\\
 & \leq CD\sum_{i,j}\left(\int_{\omega_{j}}\kappa_{i}\chi_{j}^{2}|\nabla e_{i}^{(j)}|^{2} \, {dx}
 +\sum_l \int_{D_{f,l}\cap \omega_j}\kappa_{l,i}\chi_{j}^{2}|\nabla_{f}e_{i}^{(j)}|^{2} \, {dx}
 +s^{(j)}_i(e^{(j)}_i,e^{(j)}_i)\right).
\end{align*}
In addition, we have
\[
-q \left(\sum_{j}\chi_{j}e^{(j)},\sum_{j}\chi_{j}e^{(j)} \right)
\leq
-D\sum_j \left(q(\chi_{j}e^{(j)},\chi_{j}e^{(j)})\right),
\]
where $D$ is defined in the proof of Lemma \ref{lemma1}.

For the case of coupled basis functions, using Lemma \ref{lemma0}, we obtain
\[
\sum_{i}\int_{\omega_{j}}\kappa_{i}|\nabla\chi_{j}e_{i}^{(j)}|^{2} \, {dx}
+ \sum_l \int_{D_{f,l} \cap \omega_j} \kappa_{l,i}|\nabla_{f}\chi_{j}e_{i}^{(j)}|^{2} \, {dx}
-q(\chi_{j}e^{(j)},\chi_{j}e^{(j)})\leq C\, s^{(j)}(e^{(j)},e^{(j)}).
\]
Therefore, we have
\begin{align*}
\|w-u_{snap}\|^2_{a_Q} & \leq \|w-u_{snap}\|^2_a - D\sum_j \left(q(\chi_{j}e^{(j)},\chi_{j}e^{(j)})\right)\\
&\leq C\,s^{(j)}(e^{(j)},e^{(j)}).
\end{align*}
For the case of un-coupled basis functions, using Lemma \ref{lemma0} again, we obtain
\[
\sum_{k}\int_{\omega_{j}}\kappa_{i}|\nabla\chi_{j}e_{k}^{(j)}|^{2} \, {dx}
+ \sum_l \int_{D_{f,l}\cap \omega_j} \kappa_{l,k}|\nabla_{f}\chi_{j}e_{k}^{(j)}|^{2} \, {dx}
\leq
C\,s^{(j)}(e^{(j)},e^{(j)}).
\]
Therefore, we have
\begin{align*}
\|w-u_{snap}\|^2_{a} & \leq C\sum_i\,s^{(j)}_i(e^{(j)}_i,e^{(j)}_i).
\end{align*}
Finally, by the definition of the eigen-projection, for the case of coupled basis functions, we have
\begin{align*}
s^{(j)}(e^{(j)},e^{(j)})  \leq\cfrac{1}{\lambda_{L_j+1}^{(j)}} \, a^{(j)}_{Q}(e^{(j)},e^{(j)})
  \leq\cfrac{1}{\lambda_{L_j+1}^{(j)}} \, a^{(j)}_{Q}(u_{snap},u_{snap})\leq \cfrac{1}{\lambda_{L_j+1}^{(j)}} \, a^{(j)}_{Q}(u,u)
\end{align*}
and, for the case of un-coupled basis functions, we have
\begin{align*}
s^{(j)}_{i}(e^{(j)}_i,e^{(j)}_i) \leq\cfrac{1}{\lambda_{L_j+1,i}^{(j)}} \, a^{(j)}_i(e^{(j)}_i,e^{(j)}_i)
  \leq\cfrac{1}{\lambda_{L_j+1,i}^{(j)}} \, a^{(j)}_i(u_{snap,i},u_{snap,i})\leq \cfrac{1}{\lambda_{L_j+1,i}^{(j)}} \, a^{(j)}_i(u,u).
\end{align*}
This completes the proof.
\end{proof}

Finally, by using arguments similar as {in} the proof of Lemma \ref{lemma1}, we can prove the following lemma.

\begin{lemma}
\label{lemma3}
Let $u$, $u_{snap}${,} and $w$ be defined as in Theorem \ref{thm1} and \ref{thm2}. For the case of un-coupled basis functions,
we have
\[
\|w(0,\cdot)-u_{snap}(0,\cdot)\|_{c}^{2}\leq\cfrac{CE}{\Lambda_{1}}\|u(0,x)\|_{a}^{2}.
\]
For the case of coupled basis functions, we have
\[
\|w(0,\cdot)-u_{snap}(0,\cdot)\|_{c}^{2}\leq\cfrac{CE}{\Lambda_{2}}\|u(0,x)\|_{a_{Q}}^{2}.
\]
\end{lemma}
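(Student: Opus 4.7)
The plan is to mimic the proof of Lemma \ref{lemma1} almost verbatim, replacing the time derivative $\partial/\partial t$ by pointwise evaluation at $t=0$. The key observation is that both $u_{snap}$ and $w$ share the same coefficient functions $c_{k,i}^{(j)}(t)$ (respectively $c_{k}^{(j)}(t)$); the only difference is whether the sum over $k$ is truncated at $L_j$. Hence $w(0,\cdot) - u_{snap}(0,\cdot)$ is the tail of the eigen-expansion of $u_{snap}(0,\cdot)$, and all the estimates of Lemma \ref{lemma1} transfer with obvious modifications.

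More concretely, for the un-coupled case, I would set
\begin{equation*}
e_{i}^{(j)} = \sum_{k>L_j} c_{k,i}^{(j)}(0)\,\phi_{k,i}^{(j)},
\end{equation*}
so that $w_i(0,\cdot) - u_{snap,i}(0,\cdot) = -\sum_j \chi_j\, e_i^{(j)}$. Inserting the weight factor $E = \max_{i,j,l}\{c_i \chi_j^2/(\kappa_i|\nabla\chi_j|^2),\; c_{l,i}\chi_j^2/(\kappa_{l,i}|\nabla_f\chi_j|^2)\}$ converts the $c$-norm of this combination into a weighted sum of $s_i^{(j)}$-norms:
\begin{equation*}
\|w(0,\cdot)-u_{snap}(0,\cdot)\|_c^2 \;\leq\; D E \sum_{i,j} s_i^{(j)}(e_i^{(j)}, e_i^{(j)}),
\end{equation*}
where $D$ is the bounded overlap constant from Lemma \ref{lemma1}.

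Next, I would use the orthogonality of the eigenfunctions $\phi_{k,i}^{(j)}$ with respect to $s_i^{(j)}$ and $a_i^{(j)}$, together with the ordering of the eigenvalues, to bound
\begin{equation*}
s_i^{(j)}(e_i^{(j)}, e_i^{(j)}) \leq \frac{1}{\lambda_{L_j+1,i}^{(j)}} \, a_i^{(j)}(u_{snap,i}(0,\cdot),u_{snap,i}(0,\cdot)),
\end{equation*}
and then invoke the $a_i^{(j)}$-harmonicity of $u_{snap,i}(0,\cdot)$ in $\omega_j$ to dominate the right-hand side by $a_i^{(j)}(u_i(0,\cdot),u_i(0,\cdot))$. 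Summing over $i,j$, using the definition $\Lambda_1 = \min_{i,j}\{\lambda_{L_j+1,i}^{(j)}\}$, and absorbing the bounded-overlap constant yields the desired bound
\begin{equation*}
\|w(0,\cdot)-u_{snap}(0,\cdot)\|_c^2 \leq \frac{CE}{\Lambda_1}\|u(0,\cdot)\|_a^2.
\end{equation*}
For the coupled case, the identical strategy applies: one defines $e^{(j)} = \sum_{k>L_j} c_k^{(j)}(0)\phi_k^{(j)}$, uses the coupled spectral problem's orthogonality (now with respect to $s^{(j)}$ and $a_Q^{(j)}$), and invokes the $a_Q^{(j)}$-harmonicity of $u_{snap}(0,\cdot)$ to conclude with $\Lambda_2$ and $\|u(0,\cdot)\|_{a_Q}^2$.

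I do not anticipate a real obstacle here: the proof is essentially a ``time-frozen'' copy of Lemma \ref{lemma1}. The only minor bookkeeping point is to make sure the harmonicity argument goes through at $t=0$, which it does because $u_{snap}$ is defined as the $a_i^{(j)}$- (or $a_Q^{(j)}$-) harmonic extension of $u|_{\partial\omega_j}$ for every $t$, including $t=0$. Thus, the conclusion follows from the same chain of inequalities without introducing any new ingredients.
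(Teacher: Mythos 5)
Your proposal is correct and is exactly the argument the paper intends: the paper omits the proof of this lemma entirely, remarking only that it follows ``by using arguments similar as in the proof of Lemma \ref{lemma1},'' and your ``time-frozen'' copy of that proof (tail of the eigen-expansion at $t=0$, the weight $E$, spectral orthogonality, and harmonicity of the snapshot projection) is precisely that argument.
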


\end{document}